\documentclass [11pt]{amsart}
\usepackage{amsmath}
\usepackage{amssymb}
\usepackage{comment}

\topmargin .85in \voffset = -40pt \hoffset = -44pt 
\textwidth 16cm \textheight = 45\baselineskip

\newtheorem{thm}{Theorem}[section]
\newtheorem{cor}[thm]{Corollary}
\newtheorem{lem}[thm]{Lemma}

\theoremstyle{definition}
\newtheorem{defn}[thm]{Definition}
\theoremstyle{remark}
\newtheorem{rem}[thm]{Remark}
\numberwithin{equation}{section}

\newcommand{\beas}{\begin{eqnarray*}}
\newcommand{\eeas}{\end{eqnarray*}}
\newcommand{\bes} {\begin{equation*}}
\newcommand{\ees} {\end{equation*}}
\newcommand{\be} {\begin{equation}}
\newcommand{\ee} {\end{equation}}
\newcommand{\bea} {\begin{eqnarray}}
\newcommand{\eea} {\end{eqnarray}}
\newcommand{\ra} {\rightarrow}
\newcommand{\txt} {\textmd}
\newcommand{\ds} {\displaystyle}
\newcommand{\R}{\mathbb R}
\newcommand{\C}{\mathbb C}
\newcommand{\T}{\mathbb T}
\newcommand{\N}{\mathbb N}
\newcommand{\Z}{\mathbb Z}

\begin{document}
\title[\tiny{Around Uncertainty Principles of Ingham-type on $\R^n$, $\T^n$ and 2-Step Nilpotent Lie Groups}]{Around Uncertainty Principles of Ingham-type on $\R^n$, $\T^n$ and Two Step Nilpotent Lie Groups}

\author{\tiny{Mithun Bhowmik, Swagato K. Ray and Suparna Sen}}

\address{Stat-Math Unit, Indian Statistical Institute, 203 B. T. Road, Kolkata - 700108,  India.}

\email{mithunbhowmik123@gmail.com, swagato@isical.ac.in, suparna29@gmail.com}

\thanks{This work was supported by Indian Statistical Institute, India (Research fellowship to Mithun Bhowmik); and Department of Science and Technology, India (INSPIRE Faculty Award to Suparna Sen).}


\begin{abstract}
Classical results due to Ingham and Paley-Wiener characterize the existence of nonzero functions supported on certain subsets of the real line in terms of the pointwise decay of the Fourier transforms. We view these results as uncertainty principles for Fourier transforms. We prove certain analogues of these uncertainty principles on the $n$-dimensional Euclidean space, the $n$-dimensional torus and connected, simply connected two step nilpotent Lie groups. We also use these results to show a unique continuation property of solutions to the initial value problem for time-dependent Schr\"odinger equations on the Euclidean space and a class of connected, simply connected two step nilpotent Lie groups.
\end{abstract}

\subjclass[2010]{Primary 22E25; Secondary 22E30, 43A80}

\keywords{uncertainty principle, quasi-analytic class, two-step nilpotent Lie group, Schr\"odinger equation}

\maketitle

\section{Introduction}
Uncertainty principles in harmonic analysis are results which are based on the general principle that a nonzero function and its Fourier transform both cannot be small simultaneously. Several versions of uncertainty principles have been proved over the years on Euclidean spaces and other noncommutative groups (see \cite{FS, T}). In this paper we shall be interested in some classical uncertainty principles due to Ingham and Paley-Wiener which characterize the existence of a nonzero function supported on a certain subset of the real line in terms of the pointwise decay of its Fourier transform. For example, if we consider a compactly supported function $f \in L^1(\R)$ such that its Fourier transform satisfies the estimate 
\be \label{est} 
|\widehat f(y)| \leq Ce^{-a|y|}, \:\:\:\: \txt{ for some } a>0,
\ee 
then $f$ extends as a holomorphic function to a strip in the complex plane containing the real line and hence $f$ is identically zero. 

Instead of the positive real number $a$ in (\ref{est}), in \cite{I} Ingham considered a nonnegative even function $\theta(y)$ on $\R$ decreasing to $0$ as $|y| \ra \infty$ and gave a characterization of the existence of a nonzero compactly supported function in terms of an integrability condition on $\theta$. 
\begin{thm} \label{ingham}
Let $\theta(y)$ be a nonnegative even function on $\R$ such that $\theta(y)$ decreases to zero when $y \rightarrow \infty$. There exists a nonzero continuous function $f$ on $\R,$ equal to zero outside any interval $(-l,l),$ having Fourier transform $\widehat f$ satisfying the estimate 
\bes |\widehat{f}(y)|=O(e^{-\theta(y)|y|}), \:\:\:\: \txt{ when } |y| \ra \infty,
\ees 
if and only if  
\be \label{condR} 
\int_1^{\infty}\frac{\theta(t)}{t}dt<\infty.
\ee
\end{thm}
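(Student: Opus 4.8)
I would prove the two implications by quite different means.

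\smallskip\noindent\emph{Necessity.} Suppose a nonzero continuous $f$ that vanishes outside $(-l,l)$ satisfies $|\widehat f(y)|\le Ce^{-\theta(|y|)|y|}$ (we may assume $\theta$ bounded). Since $f\in L^{1}(\R)$ has compact support, the Paley--Wiener theorem shows $F:=\widehat f$ extends to an entire function with $|F(x+iy)|\le\|f\|_{1}e^{l|y|}$, so $F$ is of exponential type, bounded on $\R$, and $\not\equiv0$. I would then invoke the classical fact from the theory of entire functions of exponential type (equivalently, of the Cartwright class): if $F\not\equiv0$ is of exponential type and $\int_{\R}(1+x^{2})^{-1}\log^{+}|F(x)|\,dx<\infty$, then $\int_{\R}(1+x^{2})^{-1}\log^{-}|F(x)|\,dx<\infty$ too. (One transparent route: $z\mapsto e^{ilz}F(z)$ is holomorphic and bounded on the closed upper half plane, not identically zero, hence has logarithmically integrable boundary modulus against that half plane's Poisson kernel.) Combined with $-\log|F(x)|\ge\theta(|x|)|x|-\log C$, this gives $\int_{\R}(1+x^{2})^{-1}\theta(|x|)|x|\,dx<\infty$, and since $\theta$ is even with $|x|(1+x^{2})^{-1}\ge(2|x|)^{-1}$ for $|x|\ge1$, this is exactly \eqref{condR}.

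\smallskip\noindent\emph{Sufficiency.} Fix $l>0$. I would build $\widehat f$ directly as an infinite product $F(z)=\prod_{k\ge1}\sin(a_{k}z)/(a_{k}z)$ with $a_{k}>0$, $a_{k}\downarrow0$, $\sum_{k}a_{k}<l$. Each factor is entire of exponential type $a_{k}$ and $\le1$ on $\R$, so $F$ is entire of exponential type $\sum_{k}a_{k}<l$; if also $F\in L^{2}(\R)$, the $L^{2}$ Paley--Wiener theorem produces $f\in L^{2}(\R)$ with $\widehat f=F$, supported in $[-\sum_{k}a_{k},\sum_{k}a_{k}]\subset(-l,l)$, while $F\in L^{1}(\R)$ makes $f$ continuous and $F(0)=1$ makes $f\ne0$. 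Putting $\lambda_{k}=1/a_{k}$ and $\nu(t)=\#\{k:\lambda_{k}\le t\}$, the inequality $|\sin u/u|\le\min\{1,|u|^{-1}\}$ gives $|F(y)|\le\exp(-\int_{\lambda_{1}}^{|y|}\nu(s)s^{-1}\,ds)$, whereas $\sum_{k}a_{k}=\int_{\lambda_{1}}^{\infty}\nu(s)s^{-2}\,ds$. So everything reduces to exhibiting a nondecreasing integer-valued $\nu$ vanishing near $0$ with $\int\nu(s)s^{-2}\,ds<l$ and $\int_{\lambda_{1}}^{t}\nu(s)s^{-1}\,ds\ge t\theta(t)$ for all large $t$.

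\smallskip To do that, I would first regularize $\theta$: using \eqref{condR} and the monotonicity of $\theta$, replace it by an even majorant $\Theta\ge\theta$ still decreasing to $0$ with $\int_{1}^{\infty}\Theta(t)t^{-1}\,dt<\infty$, arranged so that moreover $t^{2}\Theta(t)$ is nondecreasing and $t\Theta(t)\ge(\log t)^{3/2}$ for large $t$ (e.g.\ the running maximum of $s^{2}\theta(s)$ divided by $t^{2}$, plus a term $(\log t)^{3/2}/t$); a bound with $\Theta$ in place of $\theta$ is stronger, hence enough. For $\Theta$ one may then take, with a large parameter $u_{1}$ and in the variable $u=\log t$, the counting function $\nu(e^{u})=\lfloor 4e^{u}\Theta(e^{(u+u_{1})/2})\rfloor$ for $u\ge u_{1}$ and $\nu\equiv0$ below. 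Monotonicity of $t^{2}\Theta(t)$ makes $\nu$ nondecreasing; elementary changes of variables and the monotonicity of $\Theta$ give $\sum_{k}a_{k}\le8\int_{e^{u_{1}}}^{\infty}\Theta(t)t^{-1}\,dt$, which is $<l$ once $u_{1}$ is large, and $\int_{\lambda_{1}}^{t}\nu(s)s^{-1}\,ds\ge2t\Theta(t)-\log t\ge t\Theta(t)$ for large $t$; the ensuing rapid decay of $|F|$ also yields $F\in L^{1}(\R)\cap L^{2}(\R)$, closing the argument.

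\smallskip The step I expect to be the real obstacle is meeting $\sum_{k}a_{k}<l$ for \emph{arbitrarily small} $l$. One cannot simply rescale a fixed example, since contracting the support of $f$ changes the decay class of $\widehat f$, so the frequencies $1/a_{k}$ must be fitted to $\theta$ and $l$ simultaneously; and the total mass $\sum_{k}a_{k}$ of the product is essentially $\int_{\lambda_{1}}^{\infty}\theta(t)t^{-1}\,dt$, which can only be pushed below $l$ by starting the frequencies far out (the shift $u_{1}$). That in turn is precisely what makes the preliminary regularization of $\theta$ necessary --- so that the unavoidable rounding and endpoint losses (the $\log t$ terms) stay dominated by $t\theta(t)$.
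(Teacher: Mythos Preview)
The paper does not prove Theorem~\ref{ingham}; it is stated in the introduction as a classical result of Ingham \cite{I} and invoked later as a black box (e.g.\ in the proof of Theorem~\ref{open}(b)). So there is no proof in the paper to compare yours against directly. Your argument is correct and follows the classical lines. For necessity, the $H^{\infty}$/Cartwright-class fact that a nonzero bounded holomorphic function on the upper half-plane has Poisson-integrable boundary $\log$-modulus is exactly the engine behind Theorem~\ref{paleywiener}, and the paper itself remarks that necessity in Theorem~\ref{ingham} follows from Theorem~\ref{paleywiener}. For sufficiency, the infinite sinc product $\prod_k \sin(a_k z)/(a_k z)$ is Ingham's original construction; your counting-function choice $\nu(e^u)=\lfloor 4e^u\Theta(e^{(u+u_1)/2})\rfloor$ and the use of the shift $u_1$ to force $\sum a_k<l$ check out, and the preliminary passage to a majorant $\Theta$ with $t^2\Theta(t)$ nondecreasing is a standard regularization (though you should verify explicitly that your running-max recipe keeps $\int_1^\infty\Theta(t)t^{-1}\,dt<\infty$).

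For a point of contrast: the paper's proof of the \emph{stronger} necessity statement, Theorem~\ref{open}(a) (where $f$ need only vanish on an open set rather than have compact support), takes a genuinely different route, avoiding complex analysis altogether and going through the Bochner--Taylor form of the Denjoy--Carleman quasi-analyticity theorem by bounding all derivatives of $f$ from the decay of $\widehat f$. Your Cartwright-class argument genuinely requires compact support to produce an entire extension of $\widehat f$, so it does not upgrade to that setting; the quasi-analytic method is heavier machinery for the compactly supported case but is precisely what buys the open-set generalization.
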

\noindent In fact, we shall see that a stronger result holds true. Precisely, instead of considering a nonzero compactly supported function $f$ it is enough to consider a nonzero function $f$ which vanishes on any open interval to prove the necessity of condition (\ref{condR}) (see Theorem \ref{open}).  

On the other hand, if we consider a locally integrable function $\psi(y)$ instead of $a|y|$ in (\ref{est}), a result due to Paley and Wiener (Theorem II, \cite{PW1}; Theorem XII, P. 16, \cite{PW}) gives  a characterization of the existence of a nonzero $f \in L^2(\R)$ vanishing on a half-line whose Fourier transform satisfies an estimate analogous to (\ref{est}) in terms of an integrability condition on $\psi$. 

\begin{thm}\label{paleywiener}
Let $\psi$ be a nonnegative locally integrable even function on $\R$. There exists a nonzero $f \in L^2(\R)$ vanishing for $x \geq x_0$ for some $x_0 \in \R$ such that 
\bes
|\widehat{f}(y)| = O(e^{-\psi(y)}), \:\:\:\: \txt{ for almost every } y \txt{ as } |y| \ra \infty,
\ees 
if and only if 
\bes 
\int_{\R} \frac{\psi (t)}{1+t^2}dt<\infty.
\ees
\end{thm}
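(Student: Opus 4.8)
The plan is to transfer the statement to the theory of Hardy spaces on a half-plane, where both implications become standard. Since translating $f$ leaves $|\widehat f\,|$ unchanged, we may assume $x_0=0$, so the hypothesis reads ``$f$ is supported in $(-\infty,0]$''; with the normalization $\widehat f(y)=\int_\R f(x)e^{-ixy}\,dx$ the classical Paley--Wiener theorem identifies, via $f\mapsto\widehat f$, the space $\{f\in L^2(\R):\operatorname{supp}f\subset(-\infty,0]\}$ with the Hardy space $H^2(\C_+)$ of the upper half-plane $\C_+=\{z\in\C:\operatorname{Im}z>0\}$, the member of $H^2(\C_+)$ being recovered a.e.\ on $\R$ as the boundary values of its holomorphic extension. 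Hence the theorem is equivalent to: there is a nonzero $F\in H^2(\C_+)$ with $|F(y)|=O(e^{-\psi(y)})$ as $|y|\to\infty$ if and only if $\int_\R\psi(t)(1+t^2)^{-1}\,dt<\infty$. The weight $(1+t^2)^{-1}$ enters because, up to the constant $\pi$, it is the Poisson kernel of $\C_+$ at the point $i$.

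\emph{Necessity.} Suppose such an $F$ exists and pick $z_0\in\C_+$ with $F(z_0)\ne 0$, possible since $F\not\equiv 0$. A nonzero element of $H^2(\C_+)$ lies in the Smirnov class, so (writing $F$ also for its a.e.\ boundary function, which lies in $L^2(\R)$) one has $\log|F|\in L^1\big(P_{z_0}(t)\,dt\big)$, with $P_{z_0}$ the Poisson kernel at $z_0$, and the inner--outer factorization gives the majorization
\bes
-\infty<\log|F(z_0)|\le\int_\R P_{z_0}(t)\,\log|F(t)|\,dt .
\ees
Moreover $\log^+|F|\in L^1\big(P_{z_0}(t)\,dt\big)$, since $\log^+|F|\le|F|$ and $\int_\R P_{z_0}|F|\le\|P_{z_0}\|_{L^2(\R)}\|F\|_{L^2(\R)}<\infty$ by Cauchy--Schwarz. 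Combining these two facts forces $\int_\R P_{z_0}(t)\log\frac1{|F(t)|}\,dt<\infty$. The decay hypothesis gives $\log\frac1{|F(t)|}\ge\psi(t)-\log C$ for a.e.\ $t$ with $|t|$ large, and $\psi\ge 0$; as $P_{z_0}(t)$ is comparable to $(1+t^2)^{-1}$ on $\R$, this yields $\int_{|t|>R}\psi(t)(1+t^2)^{-1}\,dt<\infty$, and the local integrability of $\psi$ disposes of $|t|\le R$. Hence $\int_\R\psi(t)(1+t^2)^{-1}\,dt<\infty$.

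\emph{Sufficiency.} Assume $\int_\R\psi(t)(1+t^2)^{-1}\,dt<\infty$ and set $w(t)=-\psi(t)-\log(1+t^2)$. Then $w\le 0$ and $\int_\R|w(t)|(1+t^2)^{-1}\,dt<\infty$, using $\int_\R\log(1+t^2)(1+t^2)^{-1}\,dt<\infty$; so the outer function attached to $w$,
\bes
F(z)=\exp\!\left(\frac1{\pi i}\int_\R\!\left(\frac1{t-z}-\frac t{1+t^2}\right)w(t)\,dt\right),\qquad z\in\C_+ ,
\ees
is well defined, holomorphic and zero-free on $\C_+$ with $\log|F(z)|=\int_\R P_z(t)\,w(t)\,dt$; in particular its boundary values are $|F(t)|=e^{w(t)}=e^{-\psi(t)}(1+t^2)^{-1}\le e^{-\psi(t)}$ a.e. To see $F\in H^2(\C_+)$, apply Jensen's inequality to the averaging operator $v\mapsto\int_\R P_z(t)v(t)\,dt$: for $z=x+iy$ with $y>0$,
\bes
|F(x+iy)|=\exp\!\Big(\int_\R P_z\,w\Big)\le\int_\R P_z\,e^{w},
\ees
whose right side is, in $x$, the convolution of $e^{w}$ with the $L^1$-normalized Poisson kernel, so $\sup_{y>0}\|F(\cdot+iy)\|_{L^2(\R)}\le\|e^{w}\|_{L^2(\R)}<\infty$ by Young's inequality (note $e^{w}\le(1+t^2)^{-1}\in L^2(\R)$). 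Thus $F\in H^2(\C_+)$ is nonzero, so by Paley--Wiener $F=\widehat f$ for some nonzero $f\in L^2(\R)$ supported in $(-\infty,0]$; a translation then produces $f$ vanishing for $x\ge x_0$ with $|\widehat f\,|=|F|$, giving the required decay.

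The substantive part is the necessity half, and the one genuinely nontrivial ingredient there is the Hardy-space input used above --- that a nonzero $F\in H^2(\C_+)$ has $\log|F|$ integrable against $(1+t^2)^{-1}\,dt$ on the boundary and satisfies $\log|F(z_0)|\le\int_\R P_{z_0}\log|F|$, i.e.\ that the inner factor can only shrink the modulus inside $\C_+$ --- which is part of the Szeg\H{o}--Kolmogorov--Krein circle of ideas around the logarithmic integral. Everything else is bookkeeping: the half-plane Paley--Wiener correspondence, the construction and $H^2$-estimate of the outer function via Jensen's and Young's inequalities, and the reduction of the behaviour of $\widehat f$ near the origin to local integrability of $\psi$.
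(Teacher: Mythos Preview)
The paper does not supply its own proof of this theorem: it is quoted in the introduction as a classical result of Paley and Wiener (with citations to \cite{PW1} and \cite{PW}) and is subsequently used as a black box, notably in the proof of Theorem~\ref{halfspace}. Your argument via the half-plane Paley--Wiener correspondence with $H^2(\C_+)$, the Szeg\H{o}/logarithmic-integral bound $\int_\R(1+t^2)^{-1}\log^-|F(t)|\,dt<\infty$ for the necessity, and the explicit outer-function construction for the sufficiency, is correct and is essentially the classical proof one finds in the Paley--Wiener and Koosis references cited by the paper; there is nothing to compare against here beyond noting that your write-up recovers exactly the result the authors invoke.
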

\noindent It is easy to see that the necessity of the condition (\ref{condR}) in Theorem \ref{ingham} follows readily from Theorem \ref{paleywiener}. However, as mentioned before, we are interested in a stronger version of Ingham's uncertainty principle which will not follow from Theorem \ref{paleywiener}. Similar results on the real line have also been studied in \cite{PW1, L2, Hi, PW, L1, K}. Recently we have established similar results for continuous compactly supported functions on the Euclidean motion group in \cite{BS}.

The main purpose of this paper is to prove certain analogues of Theorem \ref{ingham} and Theorem \ref{paleywiener} on the Euclidean space $\R^n$, the $n$-dimensional torus $\mathbb T^n$ and connected, simply connected two step nilpotent Lie group $G$. We shall prove a several variable analogue (Theorem \ref{open}) of the stronger version of Theorem \ref{ingham} where we consider a nonzero function vanishing on any open set in $\R^n$. We shall also prove a several variable analogue (Theorem \ref{halfspace}) of Theorem \ref{paleywiener} where we consider functions supported on a half-space in $\R^n$. Since there is no obvious analogue of compactly supported functions or functions supported on a half-space in $\T^n$, we shall prove an analogue (Theorem \ref{torus}) of Theorem \ref{open} on $\T^n$ where we consider a nonzero function vanishing on any open subset in $\T^n$. To prove analogues (Theorem \ref{openG} and Theorem \ref{halfspG}) of Theorem \ref{open} and Theorem \ref{halfspace} on a connected, simply connected two step nilpotent Lie group $G$, we shall consider functions vanishing on open sets and half-spaces respectively in the center of $G$.   

It has recently been observed in \cite{EKPV, KPV, C, BTD, LM, PS} that it is possible to relate some of the uncertainty principles to the study of unique continuation properties of solutions to the Schr\"odinger Equation. We shall prove such a result (Theorem \ref{schrRn}) relating Theorem \ref{halfspace} to the problem of unique continuation of solutions to the Schr\"odinger Equation on $\R^n$ corresponding to a large class of second order left invariant differential operators. Our last result (Theorem \ref{uniqG}) in this paper attempts to relate the uncertainty principles of both Ingham and Paley-Wiener to the unique continuation property of solutions to the initial value problem for the time-dependent Schr\"odinger Equation on a class of connected, simply connected two step nilpotent Lie groups. 

The paper is organized as follows: In the next section we prove analogues of a stronger version of Theorem \ref{ingham} and Theorem \ref{paleywiener} for functions on $\R^n$ and an analogue of Theorem \ref{open} on $\mathbb T^n$. In section 3, we first describe the required preliminaries on a connected, simply connected two step nilpotent Lie group $G$ and then prove analogues of Theorem \ref{open} and Theorem \ref{halfspace} on $G$. In the last section we prove the unique continuation property of solutions to the initial value problem for time-dependent Schr\"odinger equation for a large class of operators on $\R^n$ and subsequently for the sublaplacian on a certain class of connected, simply connected two step groups. 

We shall use the following notation in this paper: $C_c(X)$ denotes the set of compactly supported continuous functions on $X$, $C_c^{\infty}(X)$ denotes the set of compactly supported smooth functions on $X$, $\mathcal S(X)$ denotes the space of Schwartz class functions on $X$, $\txt{supp}~f$ denotes the support of the function $f$ and $C$ denotes a constant whose value may vary. For a finite set $A$ we shall use the symbol $\#A$ to denote the number of elements in $A$. $B(0,r)$ denotes the ball of radius $r$ centered at $0$ in $\R^n$. For $x,y \in \R^n$, we shall use $|x|$ to denote the norm of the vector $x$ and $x \cdot y$ to denote the Euclidean inner product of the vectors $x$ and $y$. We shall interpret a radial function on $\R^n$ as an even function on $\R$ or a function on $[0,\infty)$ whenever required.

\section{Uncertainty Principles of Ingham and Paley-Wiener on $\R^n$ and $\T^n$}
In this section we prove analogues of the results of Ingham and Paley-Wiener on $\R^n$ and $\T^n$. 

\subsection{Uncertainty Principles of Ingham and Paley-Wiener on $\R^n$}
In this subsection our aim is to prove several variable analogues of Theorem \ref{ingham} and Theorem \ref{paleywiener}. Our method of proof uses the several variable analogue of the classical Denjoy-Carleman theorem regarding quasi-analytic class of functions proved by Bochner and Taylor \cite{BT} and the notion of Radon transform.  

First we briefly recall some standard facts regarding Radon transform. For $\omega \in S^{n-1}$ and $t \in \R$ let $H_{\omega,t}=\{x\in\R^n:x \cdot \omega=t\}$ denote the hyperplane on $\R^n$ with normal $\omega$ and distance $|t|$ from the origin. It is clear that $H_{\omega,t}=H_{-\omega,-t}$. For $f \in C_c(\R^n)$ the Radon transform $Rf$ of the function $f$ is defined by
\bes
Rf(\omega,t)=\int_{H_{\omega ,t}}f(x)dm(x),
\ees 
where $dm(x)$ is the $n-1$ dimensional Lebesgue measure of $H_{\omega,t}$. For $f \in L^1(\R^n)$ we define the Fourier transform $\widehat{f}$ of $f$ by 
\bes
\widehat{f} (y) = \int_{\R^n} f(x) e^{-2\pi ix \cdot y} dx, \:\:\:\: y \in \R^n.
\ees 
The one dimensional Fourier transform of $Rf$ and the Fourier transform of $f$ are closely connected by the slice projection theorem (P. 4, \cite{He}):
\be \label{sliceproj}
\widehat{f}(\lambda\omega)= \mathcal F_1(Rf(\omega, \cdot))(\lambda),
\ee
where $\mathcal F_1(Rf(\omega, \cdot))$ denotes the one dimensional Fourier transform of the function $t\mapsto Rf(\omega ,t)$. Clearly, if $f$ is a radial function on $\R^n$, then $Rf(\omega,t)$ is independent of $\omega$ and we can consider $Rf$ as an even function on $\R$. 
Let $C_c^{\infty}(\R^n)_0$ denote the set of compactly supported, smooth, radial functions on $\R^n$ and $C_c^{\infty}(\R)_e$ denote the set of compactly supported, smooth, even functions on $\R$. By Theorem 2.10 of \cite{He} it is known that
\be \label{radonmapping}
R:C_c^{\infty}(\R^n)_0\longrightarrow C_c^{\infty}(\R)_e
\ee
is a bijection with the property that if $g\in C_c^{\infty}(\R)_e$ with $\txt{supp}~g\subset [-r,r]$ then there exists a unique $f\in C_c^{\infty}(\R^n)_0$ with $\txt{supp}~f\subset B(0,r)$ and $Rf=g$.

Now, we need the several variable analogue of the classical Denjoy-Carleman theorem regarding quasi-analytic class of functions. To state the result we need some more notation. For a $C^{\infty}$ function $f$ on $\R^n$ we define $D_0f(x)=|f(x)|$ and 
\bes 
D_kf(x)=\left\{\sum_{|\alpha|=k} \left|\frac{{\partial}^kf(x)} {\partial x_1^{\alpha_1} \cdots \partial x_n^{\alpha_n}} \right|^2\right\}^{1/2},
\ees 
where $k \in \N$, $\alpha=(\alpha_1,\alpha_2,\cdots,\alpha_n)\in \{\N \cup \{0\}\}^n$ and $|\alpha|=\alpha_1+\alpha_2+ \cdots +\alpha_n$.

\begin{thm}[Theorem 1, \cite{BT}] \label{bochnertaylor}
Let $f$ be a smooth function defined on a connected domain $\Omega \subset \R^n$ and $x_0$ be an interior point of $\Omega$. Then the conditions
\begin{enumerate}
\item[i)] $D_kf(x)\leq m_k$ for all $x\in \Omega$, $k \in \N \cup \{0\}$,  
\item[ii)] $D_kf(x_0)=0$ for all $k \in \N \cup \{0\}$,
\item[iii)] $\ds{\sum_{k \in \N}{m_k^{-\frac{1}{k}}}=\infty}$ 
\end{enumerate}
imply that $f$ is zero throughout $\Omega$.
\end{thm}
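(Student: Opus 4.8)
The plan is to reduce Theorem~\ref{bochnertaylor} to the classical one-dimensional Denjoy--Carleman quasi-analyticity theorem by restricting $f$ to line segments, and then to spread the conclusion through $\Omega$ by a connectedness argument. Set $Z=\{x\in\Omega: D_kf(x)=0\text{ for all }k\in\N\cup\{0\}\}$. Since $f$ is smooth each $D_kf$ is continuous, so $Z$ is closed in $\Omega$, and by hypothesis (ii) we have $x_0\in Z$, so $Z\neq\emptyset$. Thus it is enough to prove that $Z$ is open: then $Z=\Omega$ by connectedness of $\Omega$, and in particular $D_0f=|f|$ vanishes identically.

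To show $Z$ is open, fix $x_1\in Z$ and pick $r>0$ with $\overline{B(x_1,r)}\subset\Omega$. For $u\in S^{n-1}$ consider the smooth function $g_u(t)=f(x_1+tu)$ on $(-r,r)$. By the chain rule $g_u^{(k)}(t)=\sum_{|\alpha|=k}\frac{k!}{\alpha!}u^\alpha(\partial^\alpha f)(x_1+tu)$; evaluating at $t=0$ and using $|\partial^\alpha f(x_1)|\le D_kf(x_1)=0$ shows $g_u^{(k)}(0)=0$ for every $k$. For the size estimate, bound $\max_{|\alpha|=k}|\partial^\alpha f(x_1+tu)|\le D_kf(x_1+tu)\le m_k$ and invoke the multinomial identity $\sum_{|\alpha|=k}\frac{k!}{\alpha!}|u^\alpha|=(|u_1|+\cdots+|u_n|)^k\le n^{k/2}$ to get $|g_u^{(k)}(t)|\le n^{k/2}m_k=:M_k$ on $(-r,r)$. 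The essential point is that hypothesis (iii) survives this reduction, only a harmless geometric factor having been incurred: $\sum_k M_k^{-1/k}=n^{-1/2}\sum_k m_k^{-1/k}=\infty$. Hence the one-dimensional Denjoy--Carleman theorem --- in the form asserting that a smooth function on an interval whose $k$-th derivative is bounded by $M_k$ and which vanishes to infinite order at an interior point must be identically zero whenever $\sum_k M_k^{-1/k}=\infty$ --- gives $g_u\equiv 0$. Since $u\in S^{n-1}$ was arbitrary, $f$ vanishes on $B(x_1,r)$, so all its partial derivatives vanish there and $B(x_1,r)\subset Z$, proving $Z$ open.

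The geometric reduction above is routine; the point that requires care is the precise form of the one-dimensional theorem used. What one needs is exactly the sufficiency direction of Carleman's criterion: if $\sum_k M_k^{-1/k}=\infty$ then the Carleman class $C\{M_k\}$ is quasi-analytic. This holds for an arbitrary positive sequence $(M_k)$ --- no log-convexity is needed --- since $\sum_k M_k^{-1/k}=\infty$ implies $\sum_k 1/\beta_k=\infty$ with $\beta_k=\inf_{j\ge k}M_j^{1/j}$, and the latter divergence is the sharp hypothesis in Carleman's theorem. (Alternatively one may first replace $(m_k)$ by its log-convex regularization, under which both the bounds $D_kf\le m_k$ and the divergence in (iii) persist, and then appeal to the classical statement.) A final trivial check is that vanishing of $f$ on every segment through $x_1$ inside $B(x_1,r)$ does yield $f\equiv 0$ on the whole ball, each point of the ball lying on such a segment. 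The main obstacle, then, is not the reduction but correctly importing the one-variable quasi-analyticity theorem in the Carleman form matching condition (iii).
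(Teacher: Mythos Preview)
The paper does not give its own proof of Theorem~\ref{bochnertaylor}; it is quoted verbatim from Bochner--Taylor \cite{BT} and used as a black box in the proofs of Theorems~\ref{open} and~\ref{torus}. So there is no proof in the paper to compare against.

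Your argument is correct and is in fact the standard route to the multivariable statement: restrict to line segments through a point of infinite-order vanishing, apply the one-dimensional Denjoy--Carleman theorem, and propagate by connectedness. The two steps that need care you have handled properly. First, the chain-rule estimate $|g_u^{(k)}(t)|\le n^{k/2}m_k$ is obtained from $|\partial^\alpha f|\le D_kf\le m_k$ together with the multinomial identity $\sum_{|\alpha|=k}\frac{k!}{\alpha!}|u^\alpha|=(|u_1|+\cdots+|u_n|)^k\le n^{k/2}$; the extra factor $n^{k/2}$ only rescales the series in (iii) by $n^{-1/2}$ and does not affect divergence. Second, you correctly flag that the one-variable Carleman criterion in the form ``$\sum_k M_k^{-1/k}=\infty\Rightarrow C\{M_k\}$ quasi-analytic'' holds without any log-convexity assumption on $(M_k)$, since $M_k^{-1/k}\le 1/\beta_k$ with $\beta_k=\inf_{j\ge k}M_j^{1/j}$ (or, equivalently, one may pass to the log-convex regularization, which only decreases the $m_k$). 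This is exactly the sufficiency direction needed, and it is the point where a careless appeal to the textbook statement (often phrased only for log-convex sequences) could go wrong.
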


Now we are in a position to present an analogue of the stronger version of Theorem \ref{ingham}.
\begin{thm}\label{open}
Let $\theta:\R^n\rightarrow [0,\infty)$ be a decreasing radial function with $\lim_{|y|\to\infty}\theta(y)=0$ and
\bes
I=\int_{|y|\geq 1}\frac{\theta (y)}{|y|^n}dy.
\ees
\begin{enumerate}
\item[(a)] Let $f\in L^1(\R^n)$ be a function satisfying the estimate
\be \label{ingcond2}
|\widehat{f}(y)|\leq Ce^{-\theta (y)|y|},
\ee
for all $y \in \R^n$. If $f$ vanishes on a nonempty open set and $I=\infty$ then $f(x)=0$ for almost every $x\in\R^n$.
\item[(b)] If $I$ is finite then there exists a nontrivial $f\in C_c^{\infty}(\R^n)$ satisfying (\ref{ingcond2}).
\end{enumerate}
\end{thm}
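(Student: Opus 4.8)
The plan is to reduce the $n$-dimensional statement to the one-dimensional Ingham theorem (Theorem \ref{ingham}) via the Radon transform, using the slice-projection identity \eqref{sliceproj}, and to invoke the Bochner--Taylor quasi-analyticity theorem (Theorem \ref{bochnertaylor}) to upgrade ``vanishing on an open set'' to ``identically zero''. For part (b), run the construction in reverse: produce a nonzero even compactly supported $g$ on $\R$ whose one-dimensional Fourier transform decays like $e^{-\theta(t)|t|}$ (this is the ``if'' direction of Theorem \ref{ingham} applied to the radial profile of $\theta$), then transport it back to $\R^n$ using the surjectivity statement \eqref{radonmapping}, so that $f = R^{-1}g \in C_c^\infty(\R^n)_0$ and $\widehat f(\lambda\omega) = \mathcal F_1 g(\lambda)$ inherits the estimate \eqref{ingcond2}. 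The observation needed here is that $\int_{|y|\ge 1} \theta(y)|y|^{-n}\,dy < \infty$ is equivalent, up to the surface-measure constant, to $\int_1^\infty \theta(t)t^{-1}\,dt<\infty$, so the hypotheses match up.

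For part (a), here is the order of steps I would follow. First, since $f$ vanishes on a nonempty open set and $\widehat f$ decays (in particular $f$ is continuous after modification on a null set, being in $L^1$ with rapidly decaying transform), I want to show $\widehat f$ — equivalently $f$ — is ``quasi-analytic'' in a sense controlled by the $m_k$ built from $\theta$. The natural route: estimate the derivatives $D_k f(x)$ by differentiating under the Fourier inversion integral, getting $D_k f(x) \le C\int_{\R^n} |y|^k e^{-\theta(y)|y|}\,dy =: m_k$. Then I must check that the divergence condition $\sum_k m_k^{-1/k} = \infty$ from Theorem \ref{bochnertaylor}(iii) is implied by $I = \infty$. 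This is the classical computation that converts the integral test $\int_1^\infty \theta(t)t^{-1}dt = \infty$ into the Carleman-type series divergence; in the radial setting $m_k \approx \int_0^\infty r^{k+n-1} e^{-\theta(r) r}\,dr$, and one estimates $m_k^{1/k}$ from above by optimizing in $r$, roughly $m_k^{1/k} \lesssim k/\theta(k)$ (after taking $r$ of order $k$ where $\theta$ is essentially constant), so $\sum m_k^{-1/k} \gtrsim \sum \theta(k)/k \asymp \int_1^\infty \theta(t)t^{-1}\,dt = \infty$. With (i), (ii) (vanishing to infinite order at an interior point of the open set), and (iii) verified, Theorem \ref{bochnertaylor} forces $f \equiv 0$.

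The main obstacle, as in the classical proof of Ingham's theorem, is precisely the derivative-bound-to-series conversion: getting a clean upper bound for $m_k = \int_{\R^n} |y|^k e^{-\theta(y)|y|}\,dy$ that is tight enough for $\sum m_k^{-1/k}$ to diverge exactly when $I$ diverges. One has to be careful because $\theta$ is only assumed decreasing and continuous-ish (no convexity, no regularity of the modulus), so the optimization in $r$ must be done by splitting the integral at $r \sim k$ and using monotonicity of $\theta$ rather than calculus; the standard device is to compare with $\int r^k e^{-c\,r\,\theta(r)}\,dr$ and use that $\theta(r)r$ eventually dominates any power of $\log r$ — or more robustly, to pass through the logarithmic change of variables that turns the Ingham integral test into a Carleman series. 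I expect the rest — differentiating under the integral sign, applying the slice-projection theorem fibrewise in $\omega$, and the reverse construction in (b) — to be routine modulo bookkeeping. A secondary point to handle in (a) is that Theorem \ref{bochnertaylor} requires bounds on \emph{all} derivatives of $f$ on all of $\R^n$, which is fine here since $m_k < \infty$ for every $k$ by the decay of $\widehat f$; but one should note that if some $m_k = \infty$ the hypothesis $I=\infty$ would already be automatic in spirit, so there is no loss.
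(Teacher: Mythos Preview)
Your plan for part (b) matches the paper's proof: use the one-dimensional Ingham theorem to build a compactly supported even $g$ on $\R$ with the right Fourier decay, then pull back via the Radon inversion \eqref{radonmapping} and the slice-projection identity \eqref{sliceproj}. This is fine.

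For part (a) there is a genuine gap. You assert that ``$m_k<\infty$ for every $k$ by the decay of $\widehat f$'', and that if some $m_k=\infty$ ``the hypothesis $I=\infty$ would already be automatic in spirit, so there is no loss.'' Neither claim does any work. The estimate \eqref{ingcond2} only gives $|\widehat f(y)|\le Ce^{-\theta(y)|y|}$, and since $\theta(y)\to 0$ nothing prevents $\theta(y)|y|$ from growing so slowly that $\widehat f\notin L^1(\R^n)$; then Fourier inversion and differentiation under the integral sign are unjustified, $f$ need not be smooth, and the quantities $m_k=\int_{\R^n}|y|^k e^{-\theta(y)|y|}\,dy$ need not be finite. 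Even when they are finite, your heuristic $m_k^{1/k}\lesssim k/\theta(k)$ (``take $r$ of order $k$'') ignores the tail $\int_R^\infty r^{k+n-1}e^{-\theta(r)r}\,dr$, which you cannot control by monotonicity of $\theta$ alone.

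The paper closes this gap by a two-stage argument you have not anticipated. First it treats the special case where $\theta(y)\ge 2|y|^{-1/2}$ for $|y|\ge 1$; this forces $|\widehat f(y)|\le Ce^{-2\sqrt{|y|}}$, which guarantees smoothness of $f$, finiteness of every $m_k$, and lets one split the defining integral at $r=k^4$ to obtain the clean bound $D_kf(x)\le C(4\pi k/\theta(k^4))^k$, after which $\sum \theta(k^4)/k=\infty$ (change of variable $y=u^4$) feeds Theorem \ref{bochnertaylor}. Second, for general $\theta$, it uses part (b) applied to the auxiliary $\theta_1(y)=4(|y|+1)^{-1/2}$ (which has finite $I$) to produce $f_1\in C_c^\infty(\R^n)$ with $|\widehat{f_1}|\le Ce^{-\theta_1(y)|y|}$; then $h=f*f_1$ vanishes on a smaller ball and satisfies \eqref{ingcond2} with $\theta+\theta_1\ge 2|y|^{-1/2}$ in place of $\theta$, reducing to the special case. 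From $h\equiv 0$ one gets $\widehat f\,\widehat{f_1}\equiv 0$ and hence $\widehat f=0$ a.e.\ by real-analyticity of $\widehat{f_1}$. This convolution trick is the missing idea in your proposal.
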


\begin{proof}
We shall first prove (b). We shall show that if $I$ is finite then given any $l>0$ there exists a nontrivial $f\in C_c^{\infty}(\R^n)$ which is supported in $B(0,l)$ and satisfies (\ref{ingcond2}). First of all, we can obtain a nontrivial function $g_1\in C_c(\R)$ with $\txt{supp}~g_1 \subset [-l/4,l/4]$ such that
\bes
|\widehat{g_1}(y)|\leq Ce^{-\theta(y)|y|},
\ees
for all $y \in \R$ by Theorem \ref{ingham}. Here we have used radiality of the function $\theta$ to interpret it as an even function on $\R$.
Let $\phi\in C_c^\infty(\R)$ be a nontrivial function with $\txt{supp}~\phi \subset [-l/4,l/4]$. We consider the function $g=g_1*\phi\in C_c^\infty(\R)$ and note that $\txt{supp}~g \subset [-l/2,l/2]$ and 
\be \label{decayg}
|\widehat g(y)| \leq C e^{-\theta(y)|y|},
\ee 
for all $y \in \R$. 
Since the condition (\ref{decayg}) is true for translates of $g$, by translating $g$ if necessary, we can get a nontrivial even function $(g(x)+g(-x))/2$ supported on $(-l,l)$ whose Fourier transform also satisfies the condition (\ref{decayg}). 
So we can assume the function $g$ to be nontrivial and even with $\txt{supp}~g \subset (-l,l)$. 
From (\ref{radonmapping}) and (\ref{sliceproj}) it now follows that there exists a nontrivial $f\in C_c^\infty(\R^n)_0$ such that $Rf=g$ with $\txt{supp}~f \subset B(0,l)$ and for all $y \in \R^n$
\bes
|\widehat{f}(y)|=|\mathcal F_1  Rf(|y|)|=|\widehat g(|y|)| \leq C e^{-\theta(y)|y|}.
\ees  
This, in particular, proves (b).
 
We now prove (a) under the following restriction on the function $\theta$, 
\be \label{cond} 
\theta(y) \geq 2|y|^{-\frac{1}{2}}, \:\:\:\: \txt{ for all } |y|\geq 1. 
\ee
Using this restriction and (\ref{ingcond2}) it follows that $\widehat{f}$ satisfies the estimate
\be \label{exponential} 
|\widehat{f}(y)| \leq C e^{-2\sqrt{|y|}}, \:\:\:\: \txt{ for all } |y|\geq 1. 
\ee
Using this exponential decay it follows that $\widehat{f}\in L^1(\R^n)$ and by the Fourier inversion $f \in C^{\infty}(\R^n)$. Hence
\bea \label{partial}
\left| \frac{\partial^k f(x)}{\partial x_1^{\alpha_1} \ldots \partial x_n^{\alpha_n} }\right| &=& \left|\frac{\partial^k}{\partial x_1^{\alpha_1}\ldots\partial x_n^{\alpha_n} }\left(\int_{\R^n}\widehat{f}(\xi)e^{2\pi i\xi\cdot y}d\xi\right)\right| \nonumber \\ &\leq& (2\pi)^k\int_{\R^n}|\xi_1|^{\alpha_1}\ldots|\xi_n|^{\alpha_n}|\widehat{f}(\xi)|d\xi,
\eea 
for $|\alpha|=k$. Differentiation under the integral sign is being justified by (\ref{exponential}). We now want to apply Theorem \ref{bochnertaylor} to the function $f$. This requires estimating the supremum of the functions $D_kf$ which we do now.
If $k>n-1$ then by (\ref{partial}) we get
\beas
D_kf(x)
&\leq& (2\pi)^k\left[\sum_{|\alpha|=k}\left(\int_{\R^n}{|\xi_1|^{\alpha_1} \cdots |\xi_n|^{\alpha_n} |\widehat{f}(\xi)|d\xi}\right)^2\right]^{1/2}\\
&\leq& (2\pi)^k\left[\sum_{|\alpha|=k}\left(\int_{\R^n}{|\xi|^k|\widehat f(\xi)|d\xi}\right)^2\right]^{1/2}\\
&\leq& C(2\pi)^k {{k+n-1}\choose k}^{1/2} \int_{\R^n} |\xi|^k e^{-\theta(\xi)|\xi|}d\xi\\
&\leq& C(2\pi)^k k^{n/2}\left(\int_0^{k^4}{r^{k+n-1}e^{-\theta(r)r}dr} + \int_{k^4}^{\infty}{r^{k+n-1}e^{-\theta(r)r}dr}\right).
\eeas
Here we have interpreted the radial function $\theta$ as a function on $\R$ and have used the elementary estimate for $k>n-1$,
\be \label{count}
\#\left\{(\alpha_1,\ldots,\alpha_n)\mid \sum_{i=1}^n\alpha_i=k, \alpha_i\in\N\cup\{0\},i=1,\ldots n\right\}= {{k+n-1}\choose k}\leq Ck^n,
\ee
where $C$ depends on $n$ but not on $k$. Using the decreasing property of $\theta$ in the first integral and the condition (\ref{cond}) in the second integral it follows that 
\beas
D_kf(x) 
&\leq& C(2\pi)^k  k^{n/2} \left( k^{4n} \int_{0}^{k^4}{r^{k-1}e^{-\theta(k^4)r}dr} + \int_{k^4}^{\infty} r^{k+n-1}e^{-2\sqrt{r}}dr \right)\\
&\leq& C(2\pi)^k  k^{n/2} \left( k^{4n} \int_{0}^{\infty}{r^{k-1}e^{-\theta(k^4)r}dr} + e^{-k^2}\int_{0}^{\infty} r^{k+n-1}e^{-\sqrt{r}}dr \right).\\
\eeas
Applying the change of variables $\theta(k^4)r=u$ and $\sqrt{r} = u$ in the respective integrals we obtain  
\beas
D_kf(x) 
&\leq& C(2\pi)^k  k^{\frac{n}{2}} \left( \frac{k^{4n}}{\{\theta(k^4)\}^k} \int_{0}^{\infty}{u^{k-1}e^{-u}du} ~ + ~ 2 e^{-k^2}\int_{0}^{\infty} u^{2k+2n-1}e^{-u} du \right)\\
&\leq& C(2\pi)^k  k^{\frac{n}{2}} \left( \frac{k^{4n} (k-1)!}{\{\theta(k^4)\}^k} ~ + ~ 2 e^{-k^2} (2k+2n-1)! \right)\\
&&  \:\:\:\: (\txt{ as } \Gamma (m)=(m-1)!, ~ m\in\N)\\
&\leq& C(2\pi)^k  \left( k^{4n+\frac{n}{2}-1}  \frac{k!}{\{\theta(k^4)\}^k} ~ + ~ 2k^{\frac{n}{2}} e^{-k^2} (2k+2n-1)! \right)\\
&\leq& C \left\{ \left(\frac{4\pi k}{\theta(k^4)}\right)^k ~ + ~ \left(\frac{4\pi(3k)^3}{e^{k}}\right)^k \right\}.
\eeas 
In the last inequality we have used the trivial estimates $k! \leq k^k$, $2k+2n-1 \leq 3k$ and $k^N \leq 2^{k}$ for fixed $N$ and large enough $k$. Clearly the second term goes to zero as $k$ goes to infinity. Hence, it follows that for large $k$, we have 
\bes
{D_kf(x) \leq C\left\{\frac{4\pi k}{\theta(k^4)}\right\}^k},  \:\:\:\: \txt{ for all $x\in\R^n$.}
\ees 
Applying the change of variable $y = u^4$ in the integral defining $I$, it follows that 
\bes
\int_1^{\infty} \frac{\theta(u^4)}{u}du=\infty.
\ees 
As $\theta$ is decreasing this in turn implies that
\be \label{divsum}
\sum_{k \in \N} \frac{\theta(k^4)}{k}=\infty.
\ee 
If $f$ vanishes on any open set then for any interior point $x_0$ of that set $D_kf(x_0)=0$ for all $k \in \N \cup \{0\}$. Applying Theorem \ref{bochnertaylor} with $m_k = C(4\pi k)^k\theta(k^4)^{-k}$ and using (\ref{divsum}) it follows that $f=0$.

Now we consider the general case, that is, $\theta$ is any nonnegative radial function on $\R^n$ decreasing to zero as $|y| \to \infty$. Since translation of $f$ does not change the absolute value of $\widehat{f}$, without loss of generality we can assume that $f$ vanishes on a ball $B(0,l)$. We consider the function 
\bes
\theta_1(y)= \frac{4}{\sqrt{|y|+1}}, \:\:\:\: y \in \R^n.
\ees
It is clear that the integral $I$ is finite if $\theta$ is replaced by $\theta_1$. Hence, by (b)
there exists a nontrivial $f_1 \in C_c^\infty(\R^n)_0$ such that $\txt{supp}~f_1 \subset B(0,l/2)$ and 
\bes
|\widehat{f_1}(\xi)| \leq Ce^{-\theta_1(\xi)|\xi|}, \:\:\:\: \xi \in \R^n.
\ees 
We now consider the function $h =f * f_1$. We claim that $h$ vanishes on the open set $B(0,l/2)$. Indeed, if $|x|<l/2$ then 
\bes
|h(x)| = \left|\int_{\R^n} f(x-y) f_1(y) dy \right| \leq C \int_{B\left(0,\frac{l}{2}\right)} |f(x-y)|dy = 0,
\ees as $f$ vanishes on $B(0,l)$. Using the inequality
\bes
\theta(\xi) + \theta_1(\xi) \geq 2|\xi|^{-1/2}, \:\:\:\: \txt{ for $|\xi| \geq 1$,}
\ees
and the trivial estimate 
\bes
|\widehat{h}(\xi)|=|\widehat f(\xi)||\widehat f_1(\xi)|\leq Ce^{-(\theta(\xi)+\theta_1(\xi))|\xi|},
\ees 
it follows that $h$ satisfies all the conditions used to prove the special case of (a). Consequently $h(x)=0$ for all $x\in\R^n$. 
This implies that $\widehat{h}(\xi) = \widehat{f}(\xi) \widehat{f_1}(\xi) = 0$ for all $\xi \in \R^n$. As $f_1 \in C_c^\infty(\R^n)$ it follows that $\widehat{f_1}$ extends to an entire function on $\C^n$. In particular, $\widehat{f_1}$ is a real analytic function on $\R^n$ and so it cannot vanish on a set of positive measure in $\R^n$. So $\widehat{f}(x)=0$ for almost every $x\in\R^n$. This completes the proof.
\end{proof}

We now present a higher dimensional version of Theorem \ref{paleywiener}.
\begin{thm}\label{halfspace}
Let $\psi:\R^{n}\rightarrow[0,\infty)$ be a locally integrable radial function such that 
\be \label{condhalf} 
I = \int_{\R^n}{\frac{\psi(x)}{(1+|x|)^{n+1}}dx}.
\ee 
\begin{enumerate}
\item[(a)] Let $f\in L^2(\R^n)$ be a nonzero function satisfying the estimate
\be \label{ingcond3}
|\widehat{f}(y)|\leq Ce^{-\psi(y)},  \:\:\:\: \txt{ for almost every $y\in\R^n$.}
\ee
Suppose $\txt{supp}~f \subset \{x\in\R^n \mid x\cdot\eta \leq t\}$ for some $\eta\in S^{n-1}$ and some $t\in \R$. If $I=\infty$ then $f(x) = 0$ for almost every $x\in\R^n$. 
\item[(b)] If $I$ is finite and $\psi$ is nondecreasing then given any $l>0$ there exists a nontrivial $f \in C_c^{\infty}(\R^n)$ supported in the  ball $B(0,l)$ satisfying (\ref{ingcond3}).
\end{enumerate}
\end{thm}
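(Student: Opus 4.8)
The plan is to treat parts (b) and (a) separately; in both cases the idea is to transfer the problem to the real line and apply the one--dimensional theorem of Paley and Wiener (Theorem \ref{paleywiener}) together with the Radon transform facts recalled above. Throughout, $\psi_0$ denotes $\psi$ regarded as a nonnegative even function on $\R$ (legitimate since $\psi$ is radial).

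For part (b) I would argue exactly as in the proof of Theorem \ref{open}(b). Since $\psi_0$ is nondecreasing it is bounded near the origin, and comparing the weights $r^{n-1}(1+r)^{-(n+1)}$ and $(1+r^2)^{-1}$ shows that $I<\infty$ implies $\int_\R\psi_0(t)(1+t^2)^{-1}\,dt<\infty$. For a nondecreasing weight obeying this condition the classical construction underlying the existence half of the Paley--Wiener theorem (see \cite{PW, K}) furnishes, for any prescribed small $\varepsilon>0$, a nonzero entire function of exponential type at most $\varepsilon$ whose restriction to $\R$ is dominated by $Ce^{-\psi_0(|\lambda|)}$; equivalently a nonzero $g_1\in C_c(\R)$, supported in an interval of length $\varepsilon$, with $|\widehat{g_1}(\lambda)|\le Ce^{-\psi_0(|\lambda|)}$. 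Convolving $g_1$ with a smooth bump and then symmetrizing, precisely as in the proof of Theorem \ref{open}(b), gives a nonzero even $g\in C_c^\infty(\R)$ with $\txt{supp}~g\subset(-l,l)$ and the same Fourier estimate, and then (\ref{radonmapping}) and (\ref{sliceproj}) produce a nonzero $f\in C_c^\infty(\R^n)_0$ with $\txt{supp}~f\subset B(0,l)$ and $|\widehat f(y)|=|\widehat g(|y|)|\le Ce^{-\psi(y)}$.

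For part (a), let $f\in L^2(\R^n)$ be nonzero, satisfying (\ref{ingcond3}) with $\txt{supp}~f\subset\{x\cdot\eta\le t\}$; the aim is to force $I<\infty$. Split $\R^n=\R\eta\oplus\eta^\perp$, write $x=s\eta+v$, and for $\xi\in\eta^\perp$ put $F_\xi(s)=\int_{\eta^\perp}f(s\eta+v)e^{-2\pi iv\cdot\xi}\,dv$. By Plancherel in $v$, $F_\xi\in L^2(\R)$ for a.e.\ $\xi$ with $\int_{\eta^\perp}\|F_\xi\|_{L^2(\R)}^2\,d\xi=\|f\|_{L^2(\R^n)}^2>0$, so $F_\xi\neq0$ on a set of positive measure; and since $f(s\eta+v)=0$ for a.e.\ $(s,v)$ with $s>t$, Fubini gives $\txt{supp}~F_\xi\subset(-\infty,t]$ for a.e.\ $\xi$. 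Moreover $\widehat{F_\xi}(\sigma)=\widehat f(\sigma\eta+\xi)$, so (\ref{ingcond3}) and radiality give $|\widehat{F_\xi}(\sigma)|\le Ce^{-\psi_0(\sqrt{\sigma^2+|\xi|^2})}$ for a.e.\ $\sigma$. A Fubini argument using $\psi_0\in L^1_{\mathrm{loc}}$ shows that $\sigma\mapsto\psi_0(\sqrt{\sigma^2+a^2})$ is locally integrable on $\R$ for a.e.\ $a\ge0$; choosing $\xi\neq0$ in the good positive--measure set, Theorem \ref{paleywiener} applied to $F_\xi$ yields $\int_\R\psi_0(\sqrt{\sigma^2+|\xi|^2})(1+\sigma^2)^{-1}\,d\sigma<\infty$. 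The substitution $r=\sqrt{\sigma^2+|\xi|^2}$ converts this into $\int_{2\max\{|\xi|,1\}}^\infty\psi_0(r)r^{-2}\,dr<\infty$, which together with $\psi_0\in L^1_{\mathrm{loc}}$ and the bound $r^{n-1}(1+r)^{-(n+1)}\le\min\{1,r^{-2}\}$ gives $I=|S^{n-1}|\int_0^\infty\psi_0(r)r^{n-1}(1+r)^{-(n+1)}\,dr<\infty$. Hence $I=\infty$ precludes such a nonzero $f$, i.e.\ $f=0$ almost everywhere.

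The conceptual heart of (a) is the reduction to one--dimensional Paley--Wiener by a partial Fourier transform in the hyperplane directions; the one thing to watch is that one must use a \emph{generic} slice $\xi\neq0$, since the Radon transform of $f$ in the direction $\eta$ --- the slice $\xi=0$ --- can vanish identically while $f$ does not, and everything else is routine Fubini and change--of--variables bookkeeping. The genuine difficulty is in (b): it cannot be deduced from Theorem \ref{open}(b) or Theorem \ref{ingham} by setting $\theta(y)=\psi(y)/|y|$, because the least decreasing majorant of $t\mapsto\psi_0(t)/t$ need not be integrable against $dt/t$ even when $\psi_0$ is nondecreasing with $\int_\R\psi_0(t)(1+t^2)^{-1}\,dt<\infty$; this is precisely why the monotonicity hypothesis is imposed in (b) but not in (a). One therefore needs the finer real--variable construction --- a canonical product $\prod_k(1-\lambda^2/\lambda_k^2)$ whose zeros $\lambda_k$ are placed so that the decay of its modulus on $\R$ dominates $e^{-\psi_0}$ --- and the delicate step is the choice of the $\lambda_k$.
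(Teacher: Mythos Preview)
Your proof is correct and follows essentially the same route as the paper: for (a) both arguments take a partial Fourier transform in the directions orthogonal to $\eta$ and reduce to the one-dimensional Paley--Wiener theorem slice by slice (you phrase it as a contrapositive, the paper as a direct implication, but the content is identical); for (b) both build a compactly supported one-variable function with the required Fourier decay---the paper invokes Lemma~4 of \cite{L2} for this step rather than \cite{PW,K}---and then lift it to $\R^n$ via the Radon transform exactly as in Theorem~\ref{open}(b). One small wording point: local integrability of $\psi$ on $\R^n$ gives $\psi_0(r)r^{n-1}\in L^1_{\mathrm{loc}}([0,\infty))$, not $\psi_0\in L^1_{\mathrm{loc}}$, but this is precisely what your final estimate needs once you bound $r^{n-1}(1+r)^{-(n+1)}$ by $r^{n-1}$ near the origin rather than by $1$.
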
 

\begin{proof} 
We define $g(x)=f(x+t\eta)$ for almost every $x \in \R^n$. Since 
\bes 
(x+t\eta) \cdot \eta = x \cdot \eta + t > t, \:\:\:\: \txt{ for } x \cdot \eta>0,
\ees 
we get that $\txt{supp}~g \subset \{x\in\R^n \mid x \cdot \eta \leq 0\}$. Consider $T\in SO(n)$ such that $T^t\eta=e_1$ where $e_1 = (1,0, \cdots, 0)$ and define $h(x)=g(Tx)$ for almost every $x \in \R^n$. Since $\langle Tx,\eta \rangle = \langle x, e_1 \rangle$, we get that $\txt{supp}~h \subset \{x\in\R^n \mid x \cdot e_1 \leq 0\}$. Moreover, for almost every $y \in \R^n$,
\bes 
|\widehat h(y)|=|\widehat g(Ty)| = |\widehat{f}(Ty)|.
\ees 
Since $\psi$ is radial, it is thus enough to prove the result for $f\in L^2(\R^n)$ with $\txt{supp}~f \subset \{x\in \R^n \mid x \cdot e_1 \leq 0\}$. 

Since $\psi$ is radial and locally integrable, we can consider $\psi$ as a locally integrable even function on $\R$ and then the given condition is equivalent to 
\bes 
\int_M^\infty \frac{\psi(r)}{1+r^2}dr=\infty, \:\:\:\: \txt{ for large } M.
\ees 
For a fixed $y \in \R^{n-1}$, we define a locally integrable function $\psi_y$ on $\R$ by $\psi_y(x)=\psi(x,y)$ for almost every $x \in \R$. Note that $y \in \R^{n-1}$ is taken from a full measure set as $\psi$ is defined pointwise almost everywhere on $\R^n$. Now, for fixed $y \in \R^{n-1}$ using the change of variable $r^2 = x^2 + |y|^2$ it follows that
\beas 
\int_M^\infty \frac{\psi_y(x)}{1+x^2}dx 
&\geq& \int_M^\infty \frac{\psi \left(\sqrt{x^2+|y|^2}\right)}{1+x^2+|y|^2}dx \\
&=& \int_{\sqrt{M^2+|y|^2}}^\infty \frac{\psi(r)}{1+r^2}\frac{r}{\sqrt{r^2-|y|^2}}dr\\
&\geq& \int_{\sqrt{M^2+|y|^2}}^\infty \frac{\psi(r)}{1+r^2}dr \\
&=& \infty.
\eeas
We define $g_y(x) = \mathcal F_{n-1}f(x,y)$ for almost every $x \in \R$ where $\mathcal F_{n-1}$ denotes the $(n-1)$ dimensional Fourier transform of the function $u \mapsto f(x,u), ~ u \in \R^{n-1}$. Since $f \in L^2(\R^n)$, by Plancherel Theorem it follows that $g_y \in L^2(\mathbb {R})$ and 
\bes
g_y(x)=0, \:\:\:\: \txt{ for } x>0.
\ees 
It follows that 
\bes 
\widehat g_y(\xi)= \mathcal F_nf(\xi,y), \:\:\:\: \txt{ for almost every } \xi \in \R,
\ees 
where $\mathcal F_n$ denotes the $n$ dimensional Fourier transform of the function $f$ and $|\widehat g_y(\xi)| = O(e^{-\psi_y(\xi)})$ as $|\xi| \ra \infty$. By Theorem \ref{paleywiener} we conclude that $g_y=0$. Since this is true for almost every $y\in{\R^{n-1}}$ we get that $f=0$. This proves (a).

To prove (b) we observe that by  Lemma 4 of \cite{L2} there exists $g_1\in C_c(\R)$ such that 
\bes
|\widehat g_1(\xi)| \leq e^{-\psi(\xi)}, \:\:\:\: \txt{ for all } \xi \in \R^n.
\ees 
We now use Radon transform and proceed exactly as in Theorem \ref{open} to construct the required function $f \in C_c^{\infty}(\R^n)$.
\end{proof}

The following is a simple corollary of Theorem \ref{halfspace}. 
\begin{cor}
Let $\psi$, $I$ be as in Theorem \ref{halfspace} and suppose $T$ is a compactly supported distribution on $\R^n$ such that 
\be \label{distribution}
|\widehat{T}(x)|\leq Ce^{-\psi(x)}, \:\:\:\:\txt{ for all $x\in\R^n$.}
\ee
If $I=\infty$ then $T=0$.
\end{cor}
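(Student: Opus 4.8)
The plan is to reduce the statement to Theorem \ref{halfspace}(a) by a standard regularization argument. Since $T$ is a compactly supported distribution, the Paley--Wiener--Schwartz theorem guarantees that $\widehat{T}$ is (the restriction to $\R^n$ of) an entire function on $\C^n$, so the pointwise hypothesis (\ref{distribution}) is meaningful; moreover $\txt{supp}~T \subset B(0,R)$ for some $R>0$, and in particular $\txt{supp}~T \subset \{x\in\R^n \mid x\cdot e_1\leq R\}$, a half-space.

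First I would fix an arbitrary $\phi\in C_c^{\infty}(\R^n)$ and set $f=T*\phi$. Then $f\in C_c^{\infty}(\R^n)\subset L^2(\R^n)$, with $\txt{supp}~f\subset\txt{supp}~T+\txt{supp}~\phi$, which is again contained in a half-space $\{x\in\R^n\mid x\cdot e_1\leq t\}$ for a suitable $t\in\R$. Moreover $\widehat f=\widehat T\,\widehat\phi$, and since $\widehat\phi\in\mathcal S(\R^n)$ is bounded, (\ref{distribution}) gives
\bes
|\widehat f(x)|\leq \|\widehat\phi\|_{\infty}\,|\widehat T(x)|\leq Ce^{-\psi(x)}, \:\:\:\: \txt{ for all } x\in\R^n.
\ees
Thus $f$ satisfies all the hypotheses of Theorem \ref{halfspace}(a) with the same radial $\psi$ and the same integral $I=\infty$; that part of the theorem then forces $f=0$ almost everywhere, and since $f$ is continuous, $f=T*\phi=0$ identically.

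Finally, since $\phi\in C_c^{\infty}(\R^n)$ was arbitrary, I would apply the above to an approximate identity $\{\phi_{\varepsilon}\}_{\varepsilon>0}\subset C_c^{\infty}(\R^n)$: then $T*\phi_{\varepsilon}\to T$ in the space of distributions as $\varepsilon\to 0$, while $T*\phi_{\varepsilon}=0$ for every $\varepsilon$, whence $T=0$. There is no serious obstacle here; the only points deserving a word of care are the use of Paley--Wiener--Schwartz both to make sense of (\ref{distribution}) and to justify $\widehat{T*\phi}=\widehat T\,\widehat\phi$, and the elementary observation that a compact set is automatically contained in a half-space, so that Theorem \ref{halfspace}(a) does apply to $T*\phi$.
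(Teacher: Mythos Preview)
Your argument is correct and follows essentially the same route as the paper: convolve $T$ with a test function to land in $C_c^{\infty}(\R^n)\subset L^2(\R^n)$, observe that the convolution is supported in a half-space and inherits the decay estimate (\ref{distribution}), and then invoke Theorem \ref{halfspace}(a). The only cosmetic difference is in the final step: the paper bypasses the approximate-identity argument by using the identity $T(\phi)=(T*\tilde\phi)(0)$ with $\tilde\phi(x)=\phi(-x)$, so that $T*\tilde\phi=0$ for every $\phi$ directly yields $T(\phi)=0$.
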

\begin{proof}
We consider $\phi\in C_c^{\infty}(\R^n)$ and define $\tilde{\phi} \in  C_c^{\infty}(\R^n)$ by
\bes
\tilde{\phi}(x) = \phi(-x), \:\:\:\:\txt{ for $x\in\R^n$.}
\ees
Then $T\ast\tilde{\phi} \in C_c^{\infty}(\R^n)$ and it's Fourier transform satisfies (\ref{distribution}). If $I=\infty$ then it follows from Theorem \ref{halfspace} that $T\ast\tilde{\phi}$ is zero. Therefore 
\bes
T(\phi) = (T \ast \tilde{\phi})(0) = 0.
\ees
Since this is true for any $\phi\in C_c^{\infty}(\R^n)$ we get that $T$ is zero.
\end{proof}

It is easy to see that one can use the same technique as in Theorem \ref{halfspace} to prove the following.
\begin{cor}
Let $\psi:\R^n\rightarrow[0,\infty)$ be a locally integrable function such that for almost every $x' \in \R^{n-1}$, 
\bes 
\int_{\R}{\frac{\psi(x,x')}{1+x^2} dx} = \infty.
\ees 
If $f\in L^2(\R^n)$ is a nonzero function with $\txt{supp}~f \subset \{x\in\R^n \mid x\cdot e_1 \leq 0\}$ such that $|\widehat f(y)|=O(e^{-\psi(y)})$ pointwise almost everywhere as $|y| \ra \infty$, then $f = 0$. 
\end{cor}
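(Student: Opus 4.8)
The plan is to imitate the proof of part (a) of Theorem \ref{halfspace}, in which the reduction to the one dimensional Paley--Wiener theorem (Theorem \ref{paleywiener}) was carried out by slicing. The only differences are that here the slicewise divergence of $\int_\R \psi(x,x')(1+x^2)^{-1}dx$ is assumed outright rather than deduced from radiality, and that $\txt{supp}~f$ already lies in the half-space $\{x\cdot e_1\leq 0\}$, so no preliminary rotation or translation is needed. Thus essentially the entire argument consists of the two displayed computations in the proof of Theorem \ref{halfspace}(a), stripped of their reduction steps.

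First I would fix the various exceptional null sets. Writing $\psi_{x'}(x)=\psi(x,x')$ for $x\in\R$ and $x'\in\R^{n-1}$, Fubini's theorem applied to the local integrability of $\psi$ on $\R^n$ shows that $\psi_{x'}$ is locally integrable on $\R$ for almost every $x'$; by hypothesis $\int_\R\psi_{x'}(x)(1+x^2)^{-1}dx=\infty$ for almost every $x'$; and since $|\widehat f(y)|\leq Ce^{-\psi(y)}$ holds for almost every $y$ with $|y|$ large, Fubini again gives, for almost every $x'$, that $|\widehat f(\xi,x')|\leq Ce^{-\psi_{x'}(\xi)}$ for almost every $\xi\in\R$ with $|\xi|$ large. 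Intersecting these three full-measure subsets of $\R^{n-1}$, we may work with a single full-measure set of $x'$.

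For such an $x'$, define $g_{x'}(x)=\mathcal F_{n-1}f(x,x')$, the $(n-1)$-dimensional Fourier transform of the function $u\mapsto f(x,u)$. Since $f\in L^2(\R^n)$, the Plancherel theorem gives $g_{x'}\in L^2(\R)$, and because $\txt{supp}~f\subset\{x\cdot e_1\leq 0\}$ we have $g_{x'}(x)=0$ for $x>0$. Moreover $\widehat{g_{x'}}(\xi)=\widehat f(\xi,x')$ for almost every $\xi\in\R$, so $|\widehat{g_{x'}}(\xi)|=O(e^{-\psi_{x'}(\xi)})$ as $|\xi|\ra\infty$. As $\int_\R\psi_{x'}(x)(1+x^2)^{-1}dx=\infty$, Theorem \ref{paleywiener} forces $g_{x'}=0$ in $L^2(\R)$. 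Since this holds for almost every $x'$, the function $(x,x')\mapsto\mathcal F_{n-1}f(x,x')$ vanishes almost everywhere on $\R^n$ (by Tonelli), and injectivity of the $(n-1)$-dimensional Fourier transform on $L^2$ then yields $f=0$ almost everywhere.

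The only point requiring a little care is the bookkeeping in the first step --- in particular, verifying that the pointwise-almost-everywhere decay of $\widehat f$ on $\R^n$ restricts, for almost every $x'$, to a pointwise-almost-everywhere decay statement in the variable $\xi$ on the slice, and similarly for the local integrability and divergence hypotheses on $\psi$. This is a routine application of Fubini's theorem, which is why the corollary follows by exactly the same technique as Theorem \ref{halfspace}.
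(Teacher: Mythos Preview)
Your proposal is correct and is exactly the approach the paper intends: the paper gives no separate argument for this corollary, stating only that it follows by the same technique as Theorem \ref{halfspace}(a), and you have written out precisely those slicing details (partial Fourier transform in the last $n-1$ variables, then the one-dimensional Paley--Wiener theorem on each slice). As you observe, the only change from that proof is that the slicewise divergence of $\int_\R\psi_{x'}(x)(1+x^2)^{-1}\,dx$ is here a hypothesis rather than a consequence of radiality.
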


\begin{rem}
\begin{enumerate}
\item
If we consider $f \in C_c(\R^n)$ in the first part of Theorem \ref{halfspace}, there is a simpler proof using Radon transform. To be precise, for fixed $\omega\in S^{n-1}$ we apply Theorem \ref{paleywiener} in the introduction to the compactly supported function $Rf(\omega,\cdot)$ on $\R$ and use the slice projection theorem (\ref{sliceproj}) to get that $f=0$.
\item
If $\psi$ is not assumed to be a radial function then $I = \infty$ in (\ref{condhalf}) does not imply that $f=0$. We illustrate this by the following example. By Theorem \ref{paleywiener} there exists a nonzero $f \in L^2(\R)$ supported in the set $\{x\in\R\mid x\leq x_0\}$ for some $x_0 \in \R$ such that 
\bes
|\widehat{f}(y)|\leq Ce^{-|y|^{1/2}}, \:\:\:\: \txt{ for all large } y \in \R.
\ees 
We define $F(x,y) = f(x) P_1(y)$, for almost every $(x,y) \in \R^2$ where 
\bes 
P_{\alpha}(x) = \frac{1}{\pi} \frac{\alpha}{\alpha^2+x^2}, \:\:\:\: \txt{ for } \alpha>0, ~ x \in \R,
\ees 
denotes the standard Poisson kernel of the upper half plane $\{ (x,y)\in\R^2\mid x \in \R, y >0 \}$. Clearly $F \in L^2(\R^2)$ with $\txt{supp}~F\subset\{ (x,y)\in\R^2\mid x \leq x_0 \}$ and 
\bes 
|\widehat{F}(u,v)| = |\widehat{f}(u)| |\widehat{P_1}(v)| \leq Ce^{-\psi(u,v)}, \:\:\:\: \txt{ for almost every } (u,v) \in \R^2, 
\ees 
where $\psi(u,v) = |u|^{1/2} + 2\pi|v|$. However, it is easy to see that the integral 
\bes
\int_{\R^2} \frac{\psi(u,v)}{(1 + |(u,v)|)^3} dudv = \int_{0}^{\infty} \int_{0}^{2\pi} \frac{r^{\frac{1}{2}} |\cos \theta|^{\frac{1}{2}} + r |\sin \theta|^{\frac{1}{2}}}{(1+r)^3} r dr d\theta = \infty.
\ees
\end{enumerate}
\end{rem}
 
\subsection{Ingham's theorem for the torus $\T^n$}
In this subsection our aim to prove an analogue of Theorem \ref{open} for the torus $\T^n=\{(e^{2\pi ix_1},\ldots,e^{2\pi ix_n})\in\C^n\mid (x_1,\ldots ,x_n)\in [0,1]^n\}$. Consequently we can identify $\T^n$ with the set $[0,1]^n=\{x=(x_1,\ldots ,x_n)\mid x_j\in [0,1], 1\leq j\leq n\}$. If $f\in L^1(\T^n)$ then we define its Fourier transform $\widehat{f}$ by the formula
\bes
\widehat{f}(m)=\int_{\T^n}f(x)e^{-2\pi im \cdot x}dx, \:\:\:\:m\in\Z^n.
\ees
If $\theta$ is a radial function on $\R^n$ then it is basically a function of $|x|$ for all $x\in\R^n$ and hence can be thought of as a function on $[0,\infty)$. In the following we are going to restrict $\theta$ on the set of natural numbers $\N$ which is well defined by the preceding observation. 

\begin{thm}\label{torus}
Let $\theta:\R^n\to [0,\infty)$ be a decreasing radial function with $\lim_{|y|\to\infty}\theta (y)=0$ and
\bes
S=\sum_{m\in \N} \frac{\theta(m)}{m}.
\ees
\begin{enumerate}
\item[(a)] Let $f\in L^1(\T^n)$ be such that 
\be \label{ingdecay-torus}
|\widehat f(m)|\leq Ce^{-\theta(m)|m|}, 
\ee
for all $m\in\Z^n$.
If $f$ vanishes on any nonempty open set $U\subset \T^n$ and $S=\infty$ then $f(x)=0$ for all $x\in\T^n$.
\item[(b)] If $S$ is finite then there exists $f \in L^1(\T^n)$ supported on any given open set $U \subset \T^n$ satisfying (\ref{ingdecay-torus}).  
\end{enumerate}
\end{thm}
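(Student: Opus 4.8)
The plan is to mimic the proof of Theorem \ref{open}, using Fourier series in place of the Fourier transform, applying Theorem \ref{bochnertaylor} to a periodic extension on $\R^n$, and --- at the place where the $\R^n$ argument uses real-analyticity --- arguing one Fourier coefficient at a time. Part (b) follows at once from Theorem \ref{open}(b): since $\theta(t)/t$ is decreasing, $S<\infty$ forces $\int_1^\infty\theta(t)t^{-1}\,dt<\infty$, hence $\int_{|y|\geq1}\theta(y)|y|^{-n}\,dy<\infty$; choosing $l<1/2$ small enough that some ball $B(x_0,l)$ lies inside $U$, the construction in the proof of Theorem \ref{open}(b) produces a nontrivial $g\in C_c^\infty(\R^n)$ with $\txt{supp}~g\subset B(0,l)$ and $|\widehat g(y)|\leq Ce^{-\theta(y)|y|}$ for all $y\in\R^n$; then $x\mapsto g(x-x_0)$, regarded as an element of $C_c^\infty(\T^n)$ supported in $U$, has Fourier coefficients $e^{-2\pi im\cdot x_0}\widehat g(m)$, which satisfy (\ref{ingdecay-torus}).

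For part (a), after translating $f$ (which leaves $|\widehat f(m)|$ unchanged) we may assume $f$ vanishes on a ball $B(0,2l)$ with $l<1/2$, and it is enough to prove $\widehat f(m_0)=0$ for an arbitrary fixed $m_0\in\Z^n$. Apply the construction in the proof of Theorem \ref{open}(b) to the decreasing radial function $\theta_1(y)=8(|y|+1)^{-1/2}$ (whose associated integral is finite) to obtain a nontrivial $g\in C_c^\infty(\R^n)$ with $\txt{supp}~g\subset B(0,l)$ and $|\widehat g(y)|\leq Ce^{-\theta_1(y)|y|}$ for all $y$. Pick $b\in\R^n$ with $\widehat g(m_0-b)\neq 0$ (possible since $g$ is nontrivial) and set $f_1(x)=g(x)e^{2\pi ib\cdot x}$, regarded as a function on $\T^n$; then $f_1\in C^\infty(\T^n)$ is supported in $B(0,l)$ and $\widehat{f_1}(m)=\widehat g(m-b)$. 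Put $h=f*f_1\in C^\infty(\T^n)$. Then $h$ vanishes on the open ball $B(0,l)$ (since $f$ vanishes on $B(0,2l)$ and $f_1$ is supported in $B(0,l)$), $\widehat h(m)=\widehat f(m)\,\widehat g(m-b)$ with $\widehat h(m_0)=\widehat f(m_0)\widehat g(m_0-b)$, and since $\theta_1$ is decreasing and $|b|$ is fixed, $\theta_1(m-b)|m-b|\geq 2\sqrt{|m|}$ once $|m|$ is large, the finitely many remaining $m$ contributing a bounded factor; combined with (\ref{ingdecay-torus}) this gives a constant $C_{m_0}$ with
\bes
|\widehat h(m)|\leq C_{m_0}\,e^{-(\theta(m)|m|+2\sqrt{|m|})},\qquad m\in\Z^n .
\ees

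The argument of Theorem \ref{open} now applies to $h$ essentially verbatim. The decay of $\widehat h$ like $e^{-2\sqrt{|m|}}$ gives $h\in C^\infty(\T^n)$, and term-by-term differentiation of the Fourier series together with (\ref{count}) yields, for $k>n-1$ and all $x$,
\bes
D_kh(x)\leq C(2\pi)^kk^{n/2}\sum_{m\in\Z^n\setminus\{0\}}|m|^k e^{-(\theta(m)+2/\sqrt{|m|})|m|}.
\ees
Splitting the sum at $|m|=k^4$ and comparing with the corresponding integrals, exactly as in Theorem \ref{open} (using that $\theta(t)+2/\sqrt t$ is decreasing and bounded below by $2t^{-1/2}$), gives $D_kh(x)\leq m_k:=C\{4\pi k/(\theta(k^4)+2k^{-2})\}^k$ for all large $k$, uniformly in $x$. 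Since the substitution $t=u^4$ and the integral test show $S=\infty$ is equivalent to $\sum_k\theta(k^4)/k=\infty$, we get $\sum_k m_k^{-1/k}=\infty$. Applying Theorem \ref{bochnertaylor} to the $\Z^n$-periodic extension of $h$ on $\Omega=\R^n$, with $x_0$ any point of the open periodic set on which $h$ vanishes, forces $h\equiv0$. Hence $0=\widehat h(m_0)=\widehat f(m_0)\widehat g(m_0-b)$, and since $\widehat g(m_0-b)\neq0$ we conclude $\widehat f(m_0)=0$; as $m_0$ was arbitrary, $f=0$.

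The one genuinely new point is this last reduction. On $\R^n$ one finishes by noting that $\widehat{f_1}$ is real-analytic and so cannot vanish on a set of positive measure, passing from $\widehat f\,\widehat{f_1}\equiv0$ to $\widehat f\equiv0$ a.e.; on $\T^n$ the dual group is the discrete group $\Z^n$, so such an argument is vacuous, and instead one must produce, for each integer $m_0$ separately, an auxiliary function in the class of part (b) whose value at $m_0$ is nonzero --- which is exactly what modulating $g$ by a suitable exponential $e^{2\pi ib\cdot x}$ accomplishes. The remaining ingredients (term-by-term differentiation of the Fourier series, the sum-versus-integral comparison, the equivalence of $S=\infty$ with $\sum_k\theta(k^4)/k=\infty$, and the application of Theorem \ref{bochnertaylor} to the periodic extension) are routine transcriptions of the proof of Theorem \ref{open}.
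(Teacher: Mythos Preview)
Your proof is correct and follows essentially the same route as the paper's: both convolve with an auxiliary function carrying the extra $\theta_1$-type decay, run the Bochner--Taylor $D_k$ estimate on the convolution, and compensate for the discrete dual (where real-analyticity of $\widehat{f_1}$ is vacuous) by modulating the auxiliary function so as to target one Fourier coefficient at a time. The only differences are organizational---the paper first isolates a ``special case'' $\theta(m)\geq 2|m|^{-1/2}$ and modulates by integer characters $e^{2\pi i(m-m_0)\cdot x}$ rather than your real $e^{2\pi ib\cdot x}$---but these are cosmetic.
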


\begin{proof}
We shall first prove (b). Since $\theta$ is a decreasing function it follows from the hypothesis about $S$ in (b) that
\bes
\int_{1}^{\infty}{\frac{\theta(y)}{y}dy}<\infty.
\ees 
Hence by Theorem \ref{open}, given any $\epsilon >0$, there exists a radial function $g\in C_c^{\infty}(\R^n)$ with $\txt{supp}~g \subset B(0,\epsilon)$ and 
\bes |\widehat g(\xi)| \leq Ce^{-\theta(\xi)|\xi|}, \:\:\:\: \txt{ for all } \xi\in\R^n.
\ees 
We now define 
\bes
f(x)=\sum_{m\in\Z^n}g(x+m), \:\:\:\:x \in \T^n.
\ees
Then $f \in L^1(\T^n)$ and by the Poisson summation formula (Theorem 2.4, \cite{SW})
\be \label{torusdecay}
|\widehat{f}(m)| = |\widehat g(m)| \leq Ce^{-\theta(m)|m|}, \:\:\:\: m\in\Z^n.
\ee
Moreover, (\ref{torusdecay}) is also true for any translate of $f$. So, we can choose $\epsilon>0$ in such a way that some translate of $f$ is supported inside the given open set $U \subset \T^n$. This completes the proof of (b). To prove (a) we start with the special case 
\be \label{condT}
\theta(m) \geq \frac{2}{\sqrt{|m|}}, \:\:\:\:\txt{ for all large $|m|$},~ m \in \mathbb Z^n.
\ee 
It follows from (\ref{ingdecay-torus}) that $\widehat{f}$ satisfies the estimate  
\bes
|\widehat f(m)| \leq C e^{-2\sqrt{|m|}}, \:\:\:\:\txt{ for all $m\in\Z^n$.}
\ees
Hence by the Fourier inversion $f \in C^\infty(\T^n)$ and 
\beas
D_kf(x) 
&\leq& (2\pi)^k \left\{\sum_{|\alpha|=k} {\left(\sum_{m\in\Z^n}{|m_1|^{\alpha_1} \cdots |m_n|^{\alpha_n}|\widehat f(m)|}\right)^2}\right\}^{\frac{1}{2}}\\
&\leq& (2\pi)^k \left\{\sum_{|\alpha|=k} \left(\sum_{m\in\Z^n}|m|^k |\widehat f(m)|\right)^2\right\}^{\frac{1}{2}}\\
&\leq& C  (2\pi)^k k^{\frac{n}{2}}\sum_{m\in \Z^n}{|m|^ke^{-\theta(m)|m|}}\\
&=& C (2\pi)^k k^{\frac{n}{2}}\sum_{p\in\N} \sum_{|m|^2=p}|m|^k e^{-\theta(m)|m|}\\
&\leq& C  (2\pi)^k k^{\frac{n}{2}} \sum_{p\in\N} p^{\frac{k}{2}+n} e^{-\theta(\sqrt p)\sqrt p}\\
&=& C  (2\pi)^k k^{\frac{n}{2}}\left( \sum_{p=1}^{k^8-1}{p^{{\frac{k}{2}}+n}e^{-\theta(\sqrt p)\sqrt p}}+ \sum_{p=k^8}^{\infty} {p^{{\frac{k}{2}}+n}e^{-\theta(\sqrt p)\sqrt p}}\right).
\eeas
In the above we have used the estimate (\ref{count}) and the fact that 
\bes
\#\left\{(m_1,\ldots,m_n)\mid \sum_{i=1}^nm_i^2=p, m_i\in\Z,i=1,\ldots n\right\}\leq Cp^n,
\ees
where $C$ is independent of $p$.
We now use the decreasing property of $\theta$ in the first expression and (\ref{condT}) in the second expression to get that for all large $k$,  
\beas
D_k f(x)
&=& C (2\pi)^k  k^{\frac{n}{2}}\left( \sum_{p=1}^{k^8-1}p^{\frac{k}{2}+n}e^{-\theta(\sqrt p)(\sqrt p+1-1)}+ \sum_{p=k^8}^{\infty} {p^{{\frac{k}{2}}+n}e^{-\theta(\sqrt p)\sqrt p}}\right)\\
&\leq& C (2\pi)^k k^{\frac{n}{2}}\left(e^{\theta(1)} \sum_{p=1}^{k^8-1}p^{\frac{k}{2}+n}e^{-\theta(\sqrt p)(\sqrt p+1)}+ \sum_{p=k^8}^{\infty} {p^{{\frac{k}{2}}+n}e^{-\theta(\sqrt p)\sqrt p}}\right)\\
&\leq& C (2\pi)^k k^{\frac{n}{2}} \left( \sum_{p=1}^{k^8-1}{p^{{\frac{k}{2}}+n}e^{-\theta(\sqrt p)\sqrt{p+1}}} + \sum_{p=k^8}^{\infty}{p^{{\frac{k}{2}}+n}e^{-2p^{\frac{1}{4}}}} \right)\\
&\leq& C (2\pi)^k k^{\frac{n}{2}} \left( \sum_{p=1}^{k^8-1}{\int_{p}^{p+1}{y^{{\frac{k}{2}}+n}e^{-\theta(\sqrt p)\sqrt y}dy}} + \sum_{p=k^8}^{\infty}{\int_{p}^{p+1}{y^{{\frac{k}{2}}+n}e^{-2(y-1)^{\frac{1}{4}}}}dy} \right)\\ 
&\leq& C (2\pi)^k k^{\frac{n}{2}} \left( \int_{1}^{k^8}{y^{{\frac{k}{2}}+n}e^{-\theta(k^4)\sqrt y}dy} + \int_{k^8-1}^{\infty}{(y+1)^{{\frac{k}{2}}+n}e^{-y^{\frac{1}{4}}}e^{-y^{\frac{1}{4}}}dy} \right) \\
\eeas
\beas
&\leq& C (2\pi)^k k^{\frac{n}{2}}\left(k^{8(n+1)}\int_{0}^{\infty}y^{\frac{k}{2}-1}e^{-\theta(k^4)\sqrt y}dy + e^{-(k^8-1)^{\frac{1}{4}}} \int_{k^8-1}^{\infty}(2y)^{\frac{k}{2}+n}e^{-y^{\frac{1}{4}}}dy\right)\\
&\leq& C (2\pi)^k k^{\frac{n}{2}} k^{8(n+1)}\int_{0}^{\infty}y^{\frac{k}{2}-1}e^{-\theta(k^4)\sqrt y}dy + C (2\pi)^k k^{\frac{n}{2}} 2^{k} e^{-\frac{k^2}{2}} \int_{0}^{\infty}y^{\frac{k}{2}+n}e^{-y^{\frac{1}{4}}}dy.
\eeas
In the above we have used the trivial estimates 
\beas 
y+1 &\leq& 2y, \:\:\:\: \txt{ for } y \geq k^8-1, \\ 
(k^8-1)^{1/4} &\geq& k^2/2, \:\:\:\: \txt{ for large } k.
\eeas
Applying change of variables $\sqrt{y}=u$ and $y^{\frac{1}{4}} = u$ in the respective integrals we obtain that for all large $k$,  
\beas 
D_kf(x)
&\leq& C (2\pi)^k k^{8n+{\frac{n}{2}}+8}\int_{0}^{\infty}{u^{k-1}e^{-\theta(k^4)u}du} + C k^{\frac{n}{2}} (4\pi)^{k} e^{-\frac{k^2}{2}} \int_{0}^{\infty}{u^{2k+4n+3}e^{-u}du}\\
&=& C (2\pi)^k \frac{k^{8n+\frac{n}{2}+8} k!}{\left\{\theta(k^4)\right\}^k} + Ck^{\frac{n}{2}}(4\pi)^{k} e^{-\frac{k^2}{2}}(2k+4n+3)!\\
&\leq& C \left(\frac{4\pi k}{\theta(k^4)}\right)^k + C\left(\frac{8\pi 3^3 k^3}{e^{k/2}}\right)^k, \:\:\:\:  \txt{ for large } k.
\eeas
In the above we have used the trivial estimates 
\bes
k^{8n+\frac{n}{2}+8} \leq 2k, \:\: k! \leq k^k, \:\: 2k+4n+3 \leq 3k,
\ees
for all large $k$. Clearly the second term goes to zero as $k$ goes to infinity. It follows that for large $k$, we have 
\bes
{D_kf(x) \leq C\left\{\frac{4\pi k}{\theta(k^4)}\right\}^k},  \:\:\:\: \txt{ for all $x\in\R^n$.}
\ees 
Since $\theta$ is decreasing and $S$ is infinite it follows that 
\bes
\int_{1}^{\infty}\frac{\theta(y)}{y}dy=\infty.
\ees 
From this it follows exactly as in the proof of Theorem \ref{open} that
\bes
\sum_{k=1}^{\infty}{\frac{\theta(k^4)}{k}}=\infty. 
\ees
By Theorem \ref{bochnertaylor} we now conclude that $f(x)=0$ for all $x\in\T^n$. We now take care of the general case. Consider the non negative radially decreasing function $\theta_1$ on $\R^n$ given by 
\bes 
\theta_1(x)=\frac{4}{\sqrt{|x|+1}}, \:\:\:\: \txt{ for }x \in \R^n.
\ees 
Clearly 
\bes
\sum_{m \in \N}\frac{\theta_1(m)}{m}< \infty,
\ees
and hence by (b) there exists $g \in L^1(\T^n)$ such that $\txt{supp}~g$ is contained in a small open set $U$ in $\T^n$ and 
\be \label{decayf1}
|\widehat{g}(m)| \leq Ce^{-\theta_1(m)|m|}, \:\:\:\:\txt{ for all $m\in\Z^n$.}
\ee 
If we define $F(x)=f*g(x)$ for $x \in \T^n$, then by choosing $U$ appropriately it follows exactly as in Theorem \ref{open} that $F$ vanishes on an open set in $\T^n$. Moreover, $\widehat{F}$ satisfies the estimate 
\bes
|\widehat F(m)| \leq Ce^{-|m|\left\{\theta(m)+\theta_1(m)\right\}}, \:\:\:\:m\in\Z^n.
\ees
Since, 
\bes
\theta(m)+\theta_1(m) \geq \frac{4}{\sqrt {|m|+1}}, \:\:\:\: m\in\Z^n,
\ees
it follows from the special case proved already that $F(x)=0$ for all $x\in\T^n$. Consequently, 
\bes
\widehat f(m) \widehat g(m)=0, \:\:\:\: \txt{ for all }m\in \Z^n.
\ees 
Since $g$ is nonzero there is a $m_0\in \Z^n$ such that $\widehat g(m_0)$ is nonzero. This implies that $\widehat f(m_0)$ is zero. 
Our aim is to prove that 
\bes 
\widehat{f}(m) = 0, \:\:\:\: \txt{ for every } m \in \Z^n.
\ees 
We now fix $m \in \Z^n$ and define a function $g_m$ on $\T^n$ by
\bes 
g_m(x)=e^{2\pi i(m-m_0) \cdot x}g(x), \:\:\:\: \txt{ for } x\in\T^n,
\ees 
so that 
\bes \widehat g_m(l) = \widehat g(l+m_0-m), \:\:\:\: \txt{ for all } l \in \Z^n.
\ees 
Defining 
\bes 
F_m(x)=f*g_m(x), \:\:\:\: \txt{ for } x \in \T^n,
\ees 
and using (\ref{ingdecay-torus}), (\ref{decayf1}) and the fact that $\theta_1$ is decreasing, we get that for any $l \in \Z^n$,
\beas
|\widehat F_m(l)| 
&=& |\widehat f(l)\widehat g(l+m_0-m)|\\
&\leq& C e^{-|l|\theta(l)} e^{-|l+m_0-m|\theta_1(l+m_0-m)}\\
&\leq& C e^{-|l|\theta(l)} e^{-|l|\theta_1(l+m_0-m)}e^{|m_0-m|\theta_1(0)}\\
&\leq& C e^{-|l|\theta(l)} e^{-\frac{4|l|}{\sqrt{|l+m_0-m|+1}}}.
\eeas
Now, for large $|l|$, the quantity $|l+m_0-m|+1$ can be dominated by $4|l|$. Hence
\bes 
|\widehat F_m(l)| \leq C e^{-\left(\frac{2}{\sqrt{|l|}} + \theta(l)\right)|l|}.
\ees 
Using translation as before, we can make $F_m$ vanish on an open set in $\T^n$. It follows from the special case proved before that $F_m$ vanishes identically. Since $\widehat g_m(m) = \widehat g(m_0)$ is nonzero we conclude that $\widehat{f}(m)$ is zero. As $m \in \Z^n$ was arbitrary it follows that $f=0$.

\end{proof}

\section{Step Two Nilpotent Lie Groups}

In this section we present analogues of the results of Ingham and Paley-Wiener on connected, simply connected two step nilpotent Lie groups. We start with the required preliminaries on two step nilpotent Lie groups.

\subsection{Representations of Step Two Nilpotent Lie Groups}

A complete account of representation theory for general connected, simply connected nilpotent Lie groups can be found in \cite{CG}. Representations of step two connected, simply connected nilpotent Lie groups and the Plancherel theorem is described in \cite{ACDS, R, PS1}. We briefly describe the basic facts to make the paper self contained.

Let $G$ be a step two connected, simply connected nilpotent Lie group. Then its Lie algebra $\mathfrak g$ has the decomposition $\mathfrak g=\mathfrak v\oplus \mathfrak z$, where 
\bes
\{0\} \neq [\mathfrak g, \mathfrak g] = [\mathfrak v, \mathfrak v] \subset \mathfrak z, \:\:\:\: [\mathfrak g, \mathfrak z] = \{ 0\}.
\ees 
Since $G$ is nilpotent the exponential map from $\mathfrak g$ to $G$ is an analytic diffeomorphism. We can identify $G$ with $\mathfrak v\oplus \mathfrak z$ and write $(V+Z)$ for $\exp(V+Z)$ and denote it by $(V,Z)$ where $V\in \mathfrak v$ and $Z\in \mathfrak z$. The product law on $G$ is given by the Baker-Campbell-Hausdorff formula
\bes 
(V,Z)(V',Z')=\left(V+V',Z+Z'+\frac{1}{2}{[V,V']}\right), \: \:\:\: \txt{ for all } V,V'\in \mathfrak v, \:\: Z,Z'\in \mathfrak z.
\ees 

Let $\mathfrak g^*$ and $\mathfrak z^*$ be the dual vector spaces of $\mathfrak g$ and $\mathfrak z$ respectivly. For each $\nu\in \mathfrak z^*$ we consider the bilinear form $B_\nu$ on $\mathfrak v$ defined by 
\bes 
B_\nu(V,V')=\nu([V,V']), \:\:\:\: \txt{ for all }V, V'\in \mathfrak v.
\ees 
Let 
\bes 
\tau_\nu=\{V \in \mathfrak v \mid \nu([V,V'])=0, \txt{ for all } V' \in \mathfrak v\}.
\ees 
We fix an orthonormal basis $\mathcal B = \{ V_1, V_2, \cdots, V_m, Z_1, \cdots, Z_k \}$ of $\mathfrak g$ such that 
\beas
\mathfrak v &=& \txt{span}_{\R} \{ V_1, \cdots , V_m \},\\ \mathfrak z &=& \txt{span}_{\R}\{Z_1, \cdots, Z_k\}.
\eeas 
Let $m_\nu$ be the orthogonal complement of $\tau_\nu$ in $\mathfrak v$. Then $\mathcal U=\{ \nu\in \mathfrak z^*\mid \dim(m_\nu) \txt{ is maximum}\}$ is a Zariski open subset of $\mathfrak z^*$. We shall denote by $S_{\nu}$ the $m \times m$ matrix whose $(i,j)$-th entry is  $B_{\nu}(V_i, V_j)$ for $1 \leq i,j \leq m$. 
\begin{defn}
The index $i$ is called a jump index for $\nu \in \mathfrak z^*$ if the rank of the $i \times m$ submatrix of $S_{\nu}$, consisting of
the first $i$ rows, is strictly greater than the rank of the $(i-1) \times m$ submatrix of $S_{\nu}$, consisting
of the first $i-1$ rows.
\end{defn}
\noindent Since $B_\nu$ is an alternating bilinear form, the number of jump indices is even. The jump indices depend on $\nu$ and on the order of the basis as well. However $\nu\in\mathcal U$ have the same set of jump indices which we shall denote by 
\bes 
P=\{j_1, j_2, \cdots, j_{2n}\} \subset \{1,2, \cdots ,m\}.
\ees 
Let $Q=\{n_1,n_2,\cdots, n_r\}$ be the complement of $P$ in $\{1,2, \cdots ,m\}$. Let 
\bes 
A_P=\txt{span}_{\R}\{V_{j_1}, \cdots, V_{j_{2n}}\},
\ees 
\beas 
A_Q &=& \txt{span}_\R\{Z_1,\cdots, Z_k,V_{n_i} \mid n_i\in Q\}, \:\:\:\: \widetilde{A}_Q =\txt{span}_{\R}\{V_{n_i} \mid n_i\in Q\},\\ 
A^*_Q &=& \txt{span}_{\R}\{Z_1^*,\cdots, Z_k^*,V^*_{n_i} \mid n_i\in Q\}, \:\:\:\: \widetilde {A}^*_Q=\txt{span}_{\R}\{V^*_{n_i} \mid n_i\in Q\},
\eeas 
where $\mathcal B^* = \{ V^*_1, V^*_2, \cdots, V^*_m, Z^*_1, \cdots, Z^*_k\}$ is the dual basis of $\mathcal B$. The irreducible unitary representations relevant to the Plancherel measure are parametrized by the set $\Lambda=\widetilde A^*_Q\times\mathcal U$. 
\begin{defn}
Let $G$ be a connected, simply connected two step nilpotent Lie group. If there exists $\nu\in \mathfrak z^*$ such that $B_\nu$ is nondegenerate then we call $G$ a step two nilpotent Lie group with MW condition or step two MW group. 
\end{defn}

In the following two subsections, we shall give an explicit description of the group Fourier transform of a function defined on $G$ and the Plancherel measure. These are going to be crucial for the proof of Theorem \ref{openG} and Theorem \ref{halfspG}. 

\subsubsection{Step two non MW groups}

In this case $\tau_\nu\neq\{0\}$ for each $\nu\in\mathcal U$. Then $B_\nu|_{m_\nu}$ is nondegenerate and hence dim $m_\nu=2n$. From the properties of an alternating bilinear form there exists an orthonormal basis $\{X_1(\nu),Y_1(\nu),\cdots,X_n(\nu),Y_n(\nu),U_1(\nu),\cdots, U_r(\nu)\}$ of $\mathfrak v$ and positive numbers $d_i(\nu)>0$ such that
\begin{enumerate}
\item  $\tau_\nu=\txt{span}_{\R}\{U_1(\nu),\cdots, U_r(\nu)\} $,
\item $\nu([X_i(\nu),Y_j(\nu)])=\delta_{i,j}d_j(\nu),1\leq i,j\leq n$.
\end{enumerate}
We call the basis 
\bes 
\{X_1(\nu),\cdots,X_n(\nu),Y_1(\nu),\cdots,Y_n(\nu),U_1(\nu),\cdots,U_r(\nu),Z_1,\cdots,Z_k\}
\ees 
of $\mathfrak g$ an almost symplectic basis. Let 
\beas 
\xi_\nu &=& \txt{span}_{\R}\{X_1(\nu),\cdots,X_n(\nu)\}, \\
\eta_\nu &=& \txt{span}_\R\{Y_1(\nu),\cdots,Y_n(\nu)\}.
\eeas
Then we have the decomposition 
\bes
\mathfrak g = \xi_\nu\oplus\eta_\nu\oplus\tau_\nu\oplus \mathfrak z = \{ X+Y+U+Z \mid X\in \xi_\nu, Y\in\eta_\nu, U\in\tau_\nu, Z\in \mathfrak z \}.
\ees 
We shall denote the element $\exp(X+Y+U+Z)$ of $G$ by $(X,Y,U,Z)$. We shall write 
\bes 
(X,Y,U,Z)=\sum_{j=1}^{n}{x_j(\nu)X_j(\nu)}+\sum_{j=1}^{n}{y_j(\nu)Y_j(\nu)}+\sum_{j=1}^{r}{u_j(\nu)U_j(\nu)+\sum_{j=1}^{k}{z_jZ_j}}
\ees and denote it by $(x,y,u,z)$ suppressing the dependence of $\nu$ which will be understood from the context. If we take $\lambda\in\Lambda$ then it can be written as $\lambda=(\mu,\nu)$, where $\mu\in\widetilde A^*_Q$ and $\nu\in\mathcal U$. Therefore 
\bes 
\lambda=(\mu,\nu)=\sum_{i=1}^{r}{\mu_iV^*_{n_i}}+\sum_{i=1}^{m}{\nu_iZ^*_i}.
\ees 
We can extend $\lambda$ to a linear functional $\lambda'$ on $\mathfrak g$ simply by defining it to be zero on $A_P$. We define 
\bes 
\tilde\mu_i = \lambda'(U_i(\nu)), \:\:\:\: \txt{ for } 1 \leq i \leq r
\ees
and consider the linear map 
\be \label{M_nu} 
M_{\nu}:\widetilde A^*_Q\rightarrow  \txt{span}_\R\{U_1(\nu)^*, \cdots ,U_r(\nu)^*\} 
\ee 
given by 
\bes 
M_{\nu}(\mu_1,\cdots,\mu_r) =(\tilde \mu_1,\cdots,\tilde \mu_r).
\ees 
It has been shown in \cite{R} that 
\bes 
|\det (J_{M_{\nu}})| = \frac{|Pf(\nu)|}{d_1(\nu)\cdots, d_n(\nu)},
\ees 
where $J_{M_{\nu}}$ is the Jacobian matrix of $M_{\nu}$ and $Pf(\nu)$ is the Pfaffian of $\nu$ given by 
\bes
Pf(\nu) = \sqrt{\det(B_{\nu}')}, \:\:\:\: (B_{\nu}')_{is} = \nu([V_{j_i}, V_{j_s}]), \:\:\:\: V_{j_i}, V_{j_s} \in A_P.
\ees 
Using the almost symplectic basis we now describe an irreducible unitary representation $\pi_{\mu,\nu}$ of G realized on $L^2(\eta_\nu)$ by the following action
\bes 
(\pi_{\mu,\nu}(x,y,u,z)\phi)(\xi)= e^ {2\pi i \left( \sum_{j=1}^{k}{\nu_jz_j}+ \sum_{j=1}^{r}{\tilde\mu_j u_j}+ \sum_{j=1}^{n}{d_j(\nu)\left(x_j\xi_j+ \frac{1}{2}x_jy_j\right)} \right)} \phi(\xi+y),
\ees 
for all $\phi\in L^2(\eta_\nu)$. We define the Fourier transform of $f\in L^1(G)$ by the operator valued integral
\bes 
\pi_{\mu,\nu}(f)=\int_{\mathfrak z} \int_{\tau_\nu}\int_{\eta_\nu} \int_{\xi_\nu}{f(x,y,u,z)\pi_{\mu,\nu}(-x,-y,-u,-z)~dx~dy~du~dz},
\ees 
for $\lambda=(\mu,\nu)\in \Lambda$. For $\nu\in \mathfrak z^*$ we consider the Euclidean Fourier transform of $f$ in the central variable given by  
\be \label{centralft}
f^\nu(x,y,u)=\int_{\mathfrak z} { e^{-2 \pi i\sum_{j=1}^{k}{\nu_j z_j}} f(x,y,u,z)dz}.
\ee 
We also define for $\tilde \mu\in\tau^*_\nu$
\bes 
f^{\tilde\mu,\nu}(x,y)=\int_{\tau_\nu}\int_{\mathfrak z} {e^{-2\pi i\sum_{j=1}^{k}{\nu_j z_j}-2\pi i\sum_{j=1}^{r}{\tilde\mu_j u_j}} f(x,y,u,z)dzdu}.
\ees
If $f\in L^1(G) \cap L^2(G)$ then $\pi_{\mu,\nu}(f)$ is an Hilbert-Schmidt operator and we have (see \cite{R}) 
\bes 
\prod_{j=1}^{n} {d_j(\nu)\|\pi_{\mu,\nu}(f)\|^2_{HS}}=\int_{\eta_\nu}\int_{\xi_\nu}{|f^{\tilde \mu,\nu}(x,y)|^2 dx dy}.
\ees 
Here and elsewhere $\|\cdot\|_{HS}$ denotes the Hilbert-Schmidt norm. Now integrating both sides on $\widetilde A^*_Q$ with respect to the Lebesgue measure and applying the transformation given by the function $M_{\nu}$ in (\ref{M_nu}) we get 
\be \label{partplancherel}
|Pf(\nu)|\int_{\tilde A^*_Q}{\|\pi_{\mu,\nu}(f)\|^2_{HS}~d\mu} = \int_{\tau^*_\nu}\int_{\eta_\nu}\int_{\xi_\nu}{|f^{\tilde\mu,\nu}(x,y)|^2~dx~dy~d\tilde\mu} = \int_{\mathfrak v} |f^\nu(v)|^2dv.
\ee
Using the Euclidean Plancherel theorem on the center $\mathfrak z$ we get the Plancherel formula for $G$ given by 
\bes 
\int_\Lambda{\|\pi_{\mu,\nu}(f)\|^2_{HS} |Pf(\nu)|~d\mu~d\nu} = \int_G{|f(v,z)|^2 dv~dz}.
\ees
The above holds for all $L^2$-functions on $G$ by a standard density argument.

\subsubsection{Step two MW groups}

In this case there exists $\nu \in \mathfrak z^*$ such that $B_\nu$ is nondegenerate. So $\mathcal U =\{\nu\in \mathfrak z^*:B_\nu \txt{ is nondegenerate}\}$ and the representations are parametrized by the Zariski open set $\Lambda= \mathcal U$. The representations are given by
\bes 
\left(\pi_\nu(x,y,z)\phi\right)(\xi)= e^{2\pi i\sum_{j=1}^{k}{\nu_j z_j}+2\pi i\sum_{j=1}^{n}{d_j(\nu)\left(x_j\xi_j+\frac{1}{2}x_jy_j\right)}} \phi(\xi+y), \:\:\:\: \phi \in L^2(\eta_\nu).
\ees 
It turns out that in this case 
\bes 
|Pf(\nu)|=\prod_{j=1}^n {d_j(\nu)}, \:\:\:\: \nu \in \Lambda.
\ees 
We define the Fourier transform of $f\in L^1(G)$ by 
\bes 
\pi_{\nu}(f)=\int_\mathfrak z\int_{\eta_\nu} \int_{\xi_\nu}{f(x,y,z)\pi_\nu(-x,-y,-z)~dx~dy~dz}, \:\:\:\: \nu \in \Lambda.
\ees 
If $f\in L^1(G) \cap L^2(G)$ then $\pi_{\nu}(f)$ is a Hilbert-Schmidt operator and 
\be \label{hs} 
|Pf(\nu)|\|\pi_{\nu}(f)\|^2_{HS}=\int_{\eta_\nu}\int_{\xi_\nu}{|f^\nu(x,y)|^2~dx~dy}=\int_v{|f^\nu(x,y)|^2~dx~dy},
\ee 
where $f^{\nu}$ is the Euclidean Fourier transform of $f$ in the central variable defined as in (\ref{centralft}).
The Plancherel formula now takes the following form
\bes 
\int_\Lambda{\|\pi_{\nu}(f)\|^2_{HS}|Pf(\nu)|d\nu} = \int_G{|f(v,z)|^2~dv~dz},
\ees 
which holds for all $L^2$-functions on $G$ by density argument.

\subsection{Uncertainty Principles of Ingham and Paley-Wiener on Step Two Nilpotent Lie Groups}

In order to state and prove analogues of the results of Ingham and Paley-Wiener on connected, simply connected two step nilpotent Lie groups, first we need to prove two lemmas which are essentially modified versions of the Euclidean results with a polynomial occurring in the estimate of the Fourier transform. It is a well known fact that the Pfaffian, which occurs in the Plancherel formula, is a homogeneous polynomial in its variables. This is the reason behind the polynomials occurring in the estimate of the Fourier transform unlike the Euclidean case.  We shall often consider a radial function on $\R^n$ as a function on $[0, \infty)$. 

\begin{lem}\label{openpoly}
Let $P$ be a polynomial on $\R^n$ and $\theta : \R^n \ra [0, \infty)$ be a decreasing radial function with $\lim_{|y| \ra \infty} \theta(y) = 0$ and 
\bes
I=\int_{|y|\geq 1}\frac{\theta (y)}{|y|^n}dy.
\ees
\begin{enumerate}
\item[(a)] Let $f\in L^1(\R^n)$ be a nontrivial function satisfying the estimate
\be \label{estlem}
|\widehat{f}(y)|\leq C |P(y)| e^{-|y|\theta(y)}, \:\:\:\: \txt{ for all } y \in \R^n.
\ee
If $f$ vanishes on a nonempty open set then $I$ must be finite. 
\item[(b)] If $I$ is finite then there exists a nontrivial $f\in C_c^{\infty}(\R^n)$ satisfying (\ref{estlem}).
\end{enumerate}
\end{lem}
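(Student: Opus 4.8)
The plan is to run the proof of Theorem~\ref{open} essentially verbatim: the extra polynomial factor, which satisfies $|P(y)|\le C(1+|y|)^{N}$ with $N=\deg P$, perturbs the estimates only by quantities that are sub-exponential in the order $k$ of differentiation, so the Denjoy--Carleman divergence criterion is unaffected. We may and do assume $P\not\equiv 0$. For part (b), since $I$ is finite, Theorem~\ref{open}(b) supplies a nontrivial $f_0\in C_c^{\infty}(\R^n)$ with $|\widehat{f_0}(y)|\le Ce^{-\theta(y)|y|}$; I would then take $f$ to be the function whose Fourier transform equals $P\,\widehat{f_0}$, that is $f=P\!\left(\tfrac{1}{2\pi i}\nabla\right)f_0$, a constant-coefficient differential operator applied to $f_0$. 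Then $f\in C_c^{\infty}(\R^n)$ with $\txt{supp}~f\subset\txt{supp}~f_0$, $|\widehat f(y)|=|P(y)|\,|\widehat{f_0}(y)|\le C|P(y)|e^{-\theta(y)|y|}$, and $f$ is nontrivial because $P\,\widehat{f_0}$ is a product of real-analytic functions on $\R^n$, neither identically zero, hence nonzero almost everywhere.

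For part (a) I would argue the contrapositive: assuming $I=\infty$, $f\neq 0$, and that $f$ vanishes on a nonempty open set, show $f=0$. Exactly as in Theorem~\ref{open}, first reduce to the special case $\theta(y)\ge 2|y|^{-1/2}$ on $|y|\ge 1$: after translating so that $f$ vanishes on a ball $B(0,l)$, convolve with a nontrivial $f_1\in C_c^{\infty}(\R^n)_0$ supported in $B(0,l/2)$ with $|\widehat{f_1}(\xi)|\le Ce^{-\theta_1(\xi)|\xi|}$, $\theta_1(\xi)=4(|\xi|+1)^{-1/2}$ (available from part~(b), since the corresponding integral converges). Then $h=f*f_1$ vanishes on $B(0,l/2)$ and $|\widehat h(\xi)|=|\widehat f(\xi)|\,|\widehat{f_1}(\xi)|\le C|P(\xi)|e^{-(\theta+\theta_1)(\xi)|\xi|}$ with $(\theta+\theta_1)(\xi)\ge 2|\xi|^{-1/2}$ on $|\xi|\ge 1$. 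Once the special case gives $h=0$, we get $\widehat f\,\widehat{f_1}\equiv 0$; since $\widehat{f_1}$ extends to a nonzero entire function on $\C^n$, its zero set is null, so $\widehat f=0$ a.e. and $f=0$, the desired contradiction.

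For the special case itself, $|\widehat f(y)|\le C(1+|y|)^{N}e^{-2\sqrt{|y|}}$ for $|y|\ge 1$, so $\widehat f\in L^1(\R^n)$, $f\in C^{\infty}(\R^n)$, and differentiation under the integral sign yields $D_kf(x)\le C(2\pi)^k k^{n/2}\int_{\R^n}|\xi|^k|\widehat f(\xi)|\,d\xi$, exactly as in Theorem~\ref{open} except that $|\widehat f|$ now carries the additional factor $(1+|\xi|)^{N}$. Splitting $\int_1^{\infty} r^{k+N+n-1}e^{-\theta(r)r}\,dr$ at $r=k^4$ --- using monotonicity of $\theta$ on $[1,k^4]$, $\theta(r)\ge 2r^{-1/2}$ on $[k^4,\infty)$, and $\theta(k^4)\ge 2k^{-2}$ to absorb the shift of the $\Gamma$-indices by $N$ --- one obtains $D_kf(x)\le C\{(Ck/\theta(k^4))^k+(Ck^3e^{-k})^k\}$ for large $k$, the polynomial-in-$k$ prefactors being harmless since $k^M\le 2^k$ eventually; hence $D_kf(x)\le m_k:=C(Ck/\theta(k^4))^k$ for large $k$. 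Since $I=\infty$ forces $\int_1^{\infty}\theta(u^4)u^{-1}\,du=\infty$ and therefore $\sum_{k}\theta(k^4)/k=\infty$, we get $\sum_{k}m_k^{-1/k}=\infty$; as $f$ and all its derivatives vanish at an interior point of the open set, Theorem~\ref{bochnertaylor} forces $f=0$.

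The only genuine difficulty is the bookkeeping in the third paragraph: one must verify that inserting $|P|$ merely shifts the Gamma-function indices by the fixed amount $N$ and multiplies the Stirling-type bounds by factors polynomial in $k$, leaving the growth rate of $m_k$ --- and hence the equivalence $\sum_{k}m_k^{-1/k}=\infty\iff I=\infty$ --- exactly as in Theorem~\ref{open}. It is also worth recording that $I=\infty$ forces $\theta(r)>0$ for every $r$ (otherwise $\theta$, being nonnegative and decreasing, would vanish outside a compact set and $I$ would converge), so that $m_k$ is well defined; with that noted, the proof reduces to routine modifications of the computations already carried out for Theorem~\ref{open}.
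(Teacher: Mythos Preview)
Your argument is correct, and part~(b) coincides with the paper's: take the $f_0$ supplied by Theorem~\ref{open}(b) and apply the constant-coefficient operator $P\!\left(\tfrac{1}{2\pi i}\nabla\right)$.

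For part~(a), however, the paper takes a shorter route that avoids re-running any of the Denjoy--Carleman estimates. Instead of tracking the factor $(1+|y|)^{N}$ through the $D_kf$ bounds, the paper chooses $\phi\in C_c^{\infty}(\R^n)$ with small enough support that $f*\phi$ is still nontrivial and still vanishes on an open set, and then observes that the polynomial cancels exactly: writing $D_P$ for the differential operator with $\widehat{D_P\phi}(y)=cP(y)\widehat{\phi}(y)$, one has for $y$ off the zero set of $P$
\bes
|\widehat{(f*\phi)}(y)|=\frac{|\widehat{D_P\phi}(y)|}{c\,|P(y)|}\,|\widehat f(y)|\le C\,|\widehat{D_P\phi}(y)|\,e^{-|y|\theta(y)}\le C e^{-|y|\theta(y)},
\ees
since $D_P\phi\in C_c^{\infty}(\R^n)$; on the zero set of $P$ the estimate (\ref{estlem}) gives $\widehat f(y)=0$, so the bound holds everywhere. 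Now Theorem~\ref{open}(a) applies verbatim to $f*\phi$, forcing $I<\infty$.

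Your approach buys robustness: it shows explicitly that a fixed polynomial weight does not disturb the quasi-analyticity threshold, a fact worth knowing in its own right. The paper's approach buys economy: a single convolution reduces the lemma to Theorem~\ref{open} in three lines, with no estimates to redo and no need to revisit the reduction to $\theta(y)\ge 2|y|^{-1/2}$.
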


\begin{proof}
We write the polynomial 
\bes
P(y) = \sum_{|\alpha| \leq N} a_{\alpha} y^{\alpha}, \:\:\:\: \txt{ for } a_{\alpha} \in \C, \: N \in \N,
\ees 
where 
\beas 
y^{\alpha} &=& y_1^{\alpha_1} \cdots y_n^{\alpha_n}, \:\:\:\: \txt{ for } y = (y_1, \cdots, y_n) \in \R^n, \\
\alpha &=& (\alpha_1, \cdots, \alpha_n) \in \{ \N \cup \{0\} \}^n.
\eeas
Now we define the differential operator 
\bes 
D_P =  \sum_{|\alpha| \leq N} \frac{a_{\alpha}}{(2\pi)^{|\alpha|}} \frac{\partial^{\alpha_1}}{\partial x_1^{\alpha_1}} \cdots  \frac{\partial^{\alpha_n}}{\partial x_1^{\alpha_n}}.
\ees 
It follows that 
\be \label{polyft} 
|\widehat{(D_P \phi)}(y)| = C |P(y)| |\widehat{\phi}(y)|, \:\:\:\: \txt{ for } \phi \in C_c^{\infty}(\R^n), \: y \in \R^n.
\ee
Let $f$ satisfy the hypothesis in (a). We consider $\phi \in {C_c}^\infty(\R^n)$ such that $f*\phi$ is nontrivial and vanishes on an open set in $\R^n$. For $y$ not in the zero set of $P$ from (\ref{polyft}) and (\ref{estlem}) we have 
\be \label{convest}
|\widehat{(f*\phi)}(y)| = C \frac{1}{|P(y)|}|\widehat{(D_P \phi)}(y)||\widehat{f}(y)| \leq Ce^{-|y|\theta(y)}.
\ee
This is true for almost every $y \in \R^n$. Since $\widehat{f}(y)$ is zero for $y$ in the zero set of $P$ the estimate (\ref{convest}) is true for all $y \in \R^n$. Applying Theorem \ref{open} to the function $f * \phi$ we get that $I$ is finite. To prove (b) let us assume that $I$ is finite. By Theorem \ref{open}, we can find a nonzero $f_1 \in C_c^{\infty}(\R^n)$ such that 
\bes 
|\widehat{f_1}(y)|\leq Ce^{-|y|\theta(y)}, \:\:\:\: \txt{ for } y \in \R^n. 
\ees
From (\ref{polyft}) we get that $f = D_P f_1 \in C_c^{\infty}(\R^n)$ satisfies the estimate
\bes 
|\widehat{f}(y)|\leq C |P(y)| e^{-|y|\theta(y)}, \:\:\:\: \txt{ for } y \in \R^n.
\ees 
\end{proof}
\begin{lem}\label{halfsppoly}
Let $P$ be a polynomial on $\R^n$ and $\psi: \R^n \ra [0, \infty)$ be a locally integrable radial function with  
\bes 
I = \int_{\R^n}\frac{\psi(x)}{(1+|x|)^{n+1}}dx.
\ees
\begin{enumerate}
\item[(a)] Let $f\in L^2(\R^n)$ be a nontrivial function satisfying the estimate
\be \label{lemest} 
|\widehat f(y)|\leq C|P(y)| e^{-\psi(y)} , \:\:\:\: \txt{ for almost every } y\in \R^n.
\ee
If $f$ is supported on a half space then $I$ must be finite. 
\item[(b)] If $\psi$ is nondecreasing and $I$ is finite then there exists a nontrivial $f\in C_c^{\infty}(\R^n)$ satisfying (\ref{lemest}).
\end{enumerate}
\end{lem}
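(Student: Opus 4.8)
The plan is to follow the proof of Lemma \ref{openpoly} almost verbatim, with Theorem \ref{open} replaced by Theorem \ref{halfspace} and ``vanishing on a nonempty open set'' replaced by ``supported on a half space''. The one idea is to strip the polynomial factor $|P|$ off $|\widehat f|$ by convolving $f$ against a smooth bump of small support and then applying a constant coefficient differential operator, after which the Euclidean half-space result becomes applicable.

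Assume $P \not\equiv 0$, the statement being vacuous in (a) and trivial in (b) otherwise. Writing $P(y) = \sum_{|\alpha| \leq N} a_{\alpha} y^{\alpha}$, form the operator $D_P = \sum_{|\alpha| \leq N} \frac{a_{\alpha}}{(2\pi)^{|\alpha|}} \partial^{\alpha}$ exactly as in Lemma \ref{openpoly}, so that (\ref{polyft}) holds: $|\widehat{(D_P \phi)}(y)| = C |P(y)|\,|\widehat{\phi}(y)|$ for every $\phi \in C_c^{\infty}(\R^n)$. For part (a), since $f$ is a nonzero element of $L^2(\R^n)$, I would choose (using an approximate identity) some $\phi \in C_c^{\infty}(\R^n)$ with $\txt{supp}~\phi \subset B(0,\epsilon)$ and $f * \phi \neq 0$; then $f * \phi \in L^2(\R^n)$ by Young's inequality, and since $\txt{supp}~f \subset \{x \in \R^n \mid x \cdot \eta \leq t\}$ we get $\txt{supp}~(f*\phi) \subset \{x \in \R^n \mid x \cdot \eta \leq t + \epsilon\}$, again a half space. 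Combining (\ref{polyft}) with (\ref{lemest}), for every $y$ outside the zero set of $P$,
\bes
|\widehat{(f*\phi)}(y)| = \frac{|\widehat{f}(y)|\,|\widehat{(D_P \phi)}(y)|}{C\,|P(y)|} \leq e^{-\psi(y)}\,|\widehat{(D_P \phi)}(y)| \leq C e^{-\psi(y)},
\ees
using that $D_P \phi \in C_c^{\infty}(\R^n)$ has bounded Fourier transform. The zero set of $P$ has Lebesgue measure zero, so the estimate holds for almost every $y \in \R^n$; applying Theorem \ref{halfspace}(a) to the nonzero function $f * \phi$ then forces $I$ to be finite.

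For part (b), suppose $\psi$ is nondecreasing and $I$ is finite. By Theorem \ref{halfspace}(b), applied with the given $l > 0$, there is a nontrivial $f_1 \in C_c^{\infty}(\R^n)$ supported in $B(0,l)$ with $|\widehat{f_1}(y)| \leq C e^{-\psi(y)}$ for all $y \in \R^n$. Set $f = D_P f_1 \in C_c^{\infty}(\R^n)$, still supported in $B(0,l)$; by (\ref{polyft}), $|\widehat{f}(y)| = |\widehat{(D_P f_1)}(y)| = C |P(y)|\,|\widehat{f_1}(y)| \leq C |P(y)| e^{-\psi(y)}$, so $f$ satisfies (\ref{lemest}). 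Moreover $f$ is nontrivial, because $\widehat{f} = Q\,\widehat{f_1}$ for a polynomial $Q$ that is not identically zero (as $P$ is not) while $\widehat{f_1}$ is the restriction to $\R^n$ of a nonzero entire function on $\C^n$, so the product cannot vanish identically.

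None of the steps presents a genuine obstacle. The only points needing a little care are checking that the convolved function $f * \phi$ is genuinely nonzero while its support remains inside a translated half space, and observing that the exceptional zero set of $P$ is negligible, so that the ``almost everywhere'' hypotheses of Theorem \ref{halfspace}(a) and of (\ref{lemest}) match up.
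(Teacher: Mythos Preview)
Your proposal is correct and follows exactly the route the paper intends: the paper's entire proof of Lemma~\ref{halfsppoly} is the sentence ``It can be proved in a manner similar to the proof of Lemma~\ref{openpoly},'' and you have carried out precisely that adaptation, replacing Theorem~\ref{open} by Theorem~\ref{halfspace} and ``vanishes on an open set'' by ``supported on a half space.'' Your added care in checking that $f*\phi$ remains nonzero and half-space supported, that the zero set of $P$ is null (so the almost-everywhere hypotheses match), and that $D_Pf_1$ is nontrivial, fills in details the paper leaves implicit.
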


\begin{proof}
It can be proved in a manner similar to the proof of Lemma \ref{openpoly}.
\end{proof}

Let $G$ be a connected, simply connected two step nilpotent Lie group. We can identify $\Lambda$ with $\R^{r+k}$ as measure spaces by identifying $\widetilde{A}^*_Q$ with $\R^r$ and $\mathcal U$ with a full measure set in $\R^k$. We now present analogues of Theorem \ref{open} and Theorem \ref{halfspace} respectively for connected, simply connected two step nilpotent Lie groups.

\begin{thm}\label{openG}
Let $\theta: \R^k \ra [0,\infty)$ be a radial decreasing function with $\lim_{|t| \ra \infty} \theta(t) = 0$ and 
\bes 
I=\int_{|t|\geq 1}\frac{\theta(t)}{|t|^k}dt. 
\ees
\begin{enumerate}
\item[(a)] Let $f\in{L^1(G)\cap L^2(G)}$ be such that 
\be \label{estimate} 
\|\pi_{\mu,\nu}(f)\|_{HS}\leq C |H(\mu)| |Pf(\nu)|^{1/2} e^{-|\nu|\theta(\nu)}, \:\:\:\: \txt{ for all } (\mu, \nu) \in \Lambda,
\ee 
where $H \in L^1(\R^r) \cap L^2 (\R^r)$. Suppose there exists $\epsilon> 0$ such that $f(v,z)$ is zero for $|z| < \epsilon$ and all $v \in \mathfrak v$. If $I=\infty$ then $f=0$.
\item[(b)] Suppose $G$ is a MW group. If $I$ is finite then there exists a nontrivial $f\in C_c(G)$ satisfying the estimate 
\bes 
\|\pi_{\nu}(f)\|_{HS}\leq C |Pf(\nu)|^{1/2} e^{-|\nu|\theta(\nu)}, \:\:\:\: \txt{ for } \nu \in \mathcal U. 
\ees
\end{enumerate}
\end{thm}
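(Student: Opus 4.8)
The plan is to reduce the statement on $G$ to the Euclidean results already established, by using the description of the group Fourier transform in terms of the Euclidean Fourier transform in the central variable, namely the identity $f^{\nu}(x,y,u)=\int_{\mathfrak z}e^{-2\pi i\sum\nu_j z_j}f(x,y,u,z)\,dz$ together with the Plancherel-type relation \eqref{partplancherel} (or \eqref{hs} in the MW case). The key observation is that $f^{\nu}$, viewed as a function of $\nu\in\mathfrak z^*\cong\R^k$ for fixed $(x,y,u)$, is the Euclidean Fourier transform of $z\mapsto f(x,y,u,z)$, and the hypothesis that $f(v,z)=0$ for $|z|<\epsilon$ forces the inverse Fourier transform (in $\nu$) of $f^{\nu}(x,y,u)$ to vanish on the ball $B(0,\epsilon)\subset\R^k$. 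So for fixed $(x,y,u)$ we want to apply Theorem \ref{open} (the Ingham-type result on $\R^k$) to the function $\nu\mapsto f^{\nu}(x,y,u)$ — or more precisely to $\check{g}$ where $g(\nu)=f^{\nu}(x,y,u)$.

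First I would fix $\nu\in\mathcal U$ and unwind \eqref{partplancherel}: integrating $\|\pi_{\mu,\nu}(f)\|_{HS}^2$ against $d\mu$ and applying the change of variables $M_\nu$ shows
\bes
\int_{\mathfrak v}|f^{\nu}(v)|^2\,dv = |Pf(\nu)|\int_{\widetilde A^*_Q}\|\pi_{\mu,\nu}(f)\|_{HS}^2\,d\mu.
\ees
Combining this with the hypothesis \eqref{estimate} and using that $H\in L^2(\R^r)$, we obtain the pointwise-in-$\nu$ estimate $\|f^{\nu}\|_{L^2(\mathfrak v)}\le C\,|Pf(\nu)|\,e^{-|\nu|\theta(\nu)}$, and since $Pf$ is a homogeneous polynomial in $\nu$ this is exactly an estimate of the form $|\widehat{F}(\nu)|\le C|P(\nu)|e^{-|\nu|\theta(\nu)}$ for the $\R^k$-valued (or, after pairing with a fixed $L^2(\mathfrak v)$ element, scalar) function $F$ whose Fourier transform in the $\nu$ variable we are controlling. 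This is precisely the setting of Lemma \ref{openpoly}, which is why those polynomial-twisted lemmas were proved. Next, I would make the support statement precise: define, for $h\in L^2(\mathfrak v)$, the scalar function $\Phi_h(z)=\langle f(\cdot,z),h\rangle_{L^2(\mathfrak v)}$, so that $\widehat{\Phi_h}(\nu)=\langle f^{\nu},h\rangle$ satisfies $|\widehat{\Phi_h}(\nu)|\le C\|h\|_2\,|Pf(\nu)|\,e^{-|\nu|\theta(\nu)}$, and $\Phi_h$ vanishes on $B(0,\epsilon)\subset\R^k$ because $f(v,z)=0$ there. Applying Lemma \ref{openpoly}(a) with $P=Pf$ and the given $\theta$: since $I=\infty$, the lemma's contrapositive forces $\Phi_h=0$. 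As $h\in L^2(\mathfrak v)$ is arbitrary, $f(\cdot,z)=0$ for a.e.\ $z$, i.e.\ $f=0$.

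For part (b), assuming $G$ is an MW group, the plan is to build $f$ by prescribing its central Fourier transform. By Lemma \ref{openpoly}(b) (or directly Theorem \ref{open}(b)), since $I<\infty$ there is a nontrivial $g_0\in C_c^{\infty}(\R^k)$ with $|\widehat{g_0}(\nu)|\le Ce^{-|\nu|\theta(\nu)}$. Pick also a nontrivial $\phi\in C_c^{\infty}(\mathfrak v)$, and set $f(v,z)=\phi(v)g_0(z)$; then $f\in C_c(G)$, $f^{\nu}(v)=\phi(v)\widehat{g_0}(\nu)$, and by \eqref{hs},
\bes
|Pf(\nu)|\,\|\pi_{\nu}(f)\|_{HS}^2=\int_{\mathfrak v}|f^{\nu}(v)|^2\,dv=\|\phi\|_{L^2(\mathfrak v)}^2\,|\widehat{g_0}(\nu)|^2\le C e^{-2|\nu|\theta(\nu)},
\ees
which gives $\|\pi_{\nu}(f)\|_{HS}\le C|Pf(\nu)|^{-1/2}e^{-|\nu|\theta(\nu)}$ — not quite the claimed estimate because $|Pf(\nu)|^{-1/2}$ appears rather than $|Pf(\nu)|^{1/2}$. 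To fix this one should instead realize the desired decay after multiplying by $|Pf(\nu)|$: choose $g_0$ so that $|\widehat{g_0}(\nu)|\le C|Pf(\nu)|\,e^{-|\nu|\theta(\nu)}$, which is available from Lemma \ref{openpoly}(b) applied with the polynomial $Pf$, and then $\|\pi_\nu(f)\|_{HS}\le C|Pf(\nu)|^{1/2}e^{-|\nu|\theta(\nu)}$ as required; one checks $f=\phi\otimes g_0$ is still a nontrivial element of $C_c(G)$.

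The main obstacle I anticipate is the bookkeeping in part (a): justifying that the scalarization $\Phi_h$ inherits both the decay estimate (this needs Cauchy--Schwarz in $\mu$ together with $H\in L^2$, and the polynomial factor $Pf(\nu)$ must be tracked honestly since it is not bounded) and the vanishing on $B(0,\epsilon)$, and then confirming that Lemma \ref{openpoly} applies with $f$ replaced by a function that is merely $L^2$ in $z$ — one may need to first convolve in the central variable with a $C_c^\infty(\R^k)$ bump, exactly as in the proof of Theorem \ref{open}, to upgrade regularity before invoking the Denjoy--Carleman machinery. The non-MW case requires the extra layer \eqref{partplancherel} with the Jacobian of $M_\nu$, but that only contributes the constant $|Pf(\nu)|$ which is precisely the polynomial already accounted for, so no new difficulty arises there.
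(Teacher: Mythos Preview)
Your approach is essentially the paper's: pair $f(\cdot,z)$ against test functions on $\mathfrak v$, use \eqref{partplancherel} (resp.\ \eqref{hs}) together with Cauchy--Schwarz and $H\in L^2(\R^r)$ to obtain the polynomial-weighted Ingham decay $|\widehat{F_\phi}(\nu)|\le C|Pf(\nu)|e^{-|\nu|\theta(\nu)}$, then invoke Lemma \ref{openpoly}; for (b) your self-corrected construction $f=\phi\otimes g_0$ with $g_0$ coming from Lemma \ref{openpoly}(b) (so that $|\widehat{g_0}(\nu)|\le C|Pf(\nu)|e^{-|\nu|\theta(\nu)}$) is exactly what the paper does. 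The only simplification the paper makes is to take the test function in $C_c(\mathfrak v)$ rather than $L^2(\mathfrak v)$, which---since $f\in L^1(G)$---gives $F_\phi\in L^1(\R^k)$ directly and eliminates the regularity obstacle you anticipated, with no need for any auxiliary convolution.
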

\begin{proof}
For a fixed $\phi \in C_c(\mathfrak v)$ we consider the function $F_\phi$ on $\mathfrak z$ defined by 
\bes 
F_\phi(z)=\int_{\mathfrak v}{f(v,z)\overline{\phi(v)}~dv} , \:\:\:\: \txt{ for almost every } z \in \mathfrak z. 
\ees 
Clearly $F_\phi \in L^1(\R^k)$.
For $\nu \in \mathcal U$ the Euclidean Fourier transform $\widehat{F_{\phi}}$ of $F_{\phi}$ is given by 
\bes
\widehat F_\phi(\nu)=\int_{\mathfrak v}{f^{\nu}(v)\overline{\phi(v)}~dv},
\ees
where $f_{\nu}$ is defined as in (\ref{centralft}). Applying Cauchy-Schwartz inequality and using (\ref{partplancherel}) and (\ref{estimate}) we get that for $\nu \in \mathcal U$
\bes
|\widehat F_\phi(\nu)|^2 \leq C \int_{\mathfrak v}{|f^{\nu}(v)|^2 dv} \leq C|Pf(\nu)|^2 \|H\|_{L^2(\R^r)}^2 e^{-2|\nu|\theta(\nu)}. 
\ees
It follows that 
\bes 
|\widehat F_\phi(\nu)| \leq C |Pf(\nu)| e^{-|\nu|\theta(\nu)}, \:\:\:\: \txt{ for } \nu \in \mathcal U.
\ees
Clearly $F_\phi(z)$ is zero if $|z|<\epsilon$. Applying Lemma \ref{openpoly} to the function $F_{\phi}$ on $\R^k$ we get that $F_\phi$ is zero. Since this is true for all $\phi \in C_c(\mathfrak v)$ we conclude that $f$ is zero. This proves (a). 

We shall now prove (b). By Lemma \ref{openpoly} there exists $g\in C_c^\infty(\R^k)$ satisfying the estimate
\bes 
|\widehat{g}(\nu)|\leq C|Pf(\nu)| e^{-\theta(\nu)|\nu|}, \:\:\:\: \txt{ for } \nu \in \R^k, 
\ees 
where $Pf(\nu)$ is the Pfaffian which is a polynomial in $\nu$.
We consider a fixed $h\in C_c(\mathfrak v)$ and define 
\bes 
f(v,z)= h(v)g(z), \:\:\:\: \txt{ for } v\in \mathfrak v, \: z\in \mathfrak z.
\ees 
Clearly $f\in C_c(G)$ and for $\nu\in \Lambda$ $\pi_{\nu}(f)$ is a Hilbert-Schmidt operator. By (\ref{hs}) we get that 
\bes
 |Pf(\nu)|\|\pi_{\nu}(f)\|^2_{HS} = \int_{\mathfrak v}{|f^\nu(v)|^2 ~dv} = |\widehat g(\nu)|^2 \int_{\mathfrak v}{|h(v)|^2 dv} \leq C |Pf(\nu)|^2 e^{-2 |\nu|\theta (\nu)}.
\ees
Hence we obtain 
\bes 
\|\pi_{\nu}(f)\|_{HS} \leq C |Pf(\nu)|^{1/2} e^{- |\nu|\theta (\nu)}, \:\:\:\: \txt{ for } \nu \in \mathcal U.
\ees
\end{proof}

\begin{thm}\label{halfspG}
Let $\psi : \R^k \ra [0, \infty)$ be a locally integrable radial function and 
\bes 
I = \int_{\R^k}{\frac{\psi(t)}{(1+|t|)^{k+1}}dt}.
\ees 
\begin{enumerate}
\item[(a)] Suppose $f\in L^2(G)$ satisfies the estimate
\bes 
\|\pi_{\mu,\nu}(f)\|_{HS}\leq C |H(\mu)| |Pf(\nu)|^{1/2} e^{-\psi(\nu)}, \:\:\:\: \txt{ for almost every } (\mu, \nu) \in \Lambda,
\ees 
where $H \in L^1(\R^r) \cap L^2 (\R^r)$. 
Suppose that $\txt{supp}~f \subset \{(v,z) \in G\mid z\cdot\eta \leq p\}$ for some $\eta\in S^{k-1}$ and some $p\in \R$. If $I=\infty$ then $f(g) = 0$ for almost every $g\in G$. 
\item[(b)] Suppose $G$ is a MW group. If $I$ is finite and $\psi$ is nondecreasing then there exists a nontrivial $f \in C_c(G)$ satisfying the estimate
\bes
\|\pi_{\nu}(f)\|_{HS}\leq C |Pf(\nu)|^{1/2} e^{-\psi(\nu)}, \:\:\:\: \txt{ for } \nu \in \mathcal U.
\ees
\end{enumerate}
\end{thm}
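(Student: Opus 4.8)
The plan is to run the proof of Theorem \ref{openG} with half-spaces in place of open sets and Lemma \ref{halfsppoly} in place of Lemma \ref{openpoly}, thereby reducing the problem on $G$ to the scalar half-space result on the center $\mathfrak z \cong \R^k$. For part (a), fix $\phi \in C_c(\mathfrak v)$ and set
\bes
F_\phi(z) = \int_{\mathfrak v} f(v,z)\overline{\phi(v)}\,dv, \:\:\:\: z \in \mathfrak z.
\ees
By the Cauchy--Schwarz inequality in the $v$-variable together with Fubini's theorem, $F_\phi \in L^2(\mathfrak z) \cong L^2(\R^k)$, and its Euclidean Fourier transform is $\widehat{F_\phi}(\nu) = \int_{\mathfrak v} f^\nu(v)\overline{\phi(v)}\,dv$ for $\nu \in \mathcal U$, where $f^\nu$ is the central Fourier transform as in (\ref{centralft}). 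Since $\txt{supp}~f \subset \{(v,z) \in G \mid z\cdot\eta \leq p\}$, the function $F_\phi$ vanishes whenever $z\cdot\eta > p$, so $\txt{supp}~F_\phi$ lies in the half-space $\{z \in \R^k : z\cdot\eta \leq p\}$.

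Next I would derive the pointwise bound on $\widehat{F_\phi}$. Applying Cauchy--Schwarz and then the Plancherel identity (\ref{partplancherel}) (which reduces to (\ref{hs}) in the MW case) gives
\bes
|\widehat{F_\phi}(\nu)|^2 \leq \|\phi\|_{L^2(\mathfrak v)}^2 \int_{\mathfrak v} |f^\nu(v)|^2\,dv = \|\phi\|_{L^2(\mathfrak v)}^2\, |Pf(\nu)| \int_{\widetilde{A}^*_Q} \|\pi_{\mu,\nu}(f)\|_{HS}^2\,d\mu,
\ees
and the hypothesis $\|\pi_{\mu,\nu}(f)\|_{HS} \leq C|H(\mu)||Pf(\nu)|^{1/2}e^{-\psi(\nu)}$ with $H \in L^2(\R^r)$ bounds the right-hand side by $C|Pf(\nu)|^2 e^{-2\psi(\nu)}$. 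Hence $|\widehat{F_\phi}(\nu)| \leq C|Pf(\nu)| e^{-\psi(\nu)}$ for $\nu \in \mathcal U$, and since $\mathcal U$ has full Lebesgue measure in $\R^k$ this holds for almost every $\nu$. Because the Pfaffian $Pf$ is a polynomial in $\nu$, Lemma \ref{halfsppoly}(a) applies to $F_\phi$: if $I = \infty$, then $F_\phi$ cannot be nontrivial, so $F_\phi = 0$ a.e. Running this for every $\phi$ in a countable dense subset of $C_c(\mathfrak v)$ forces $f(\cdot,z) = 0$ in $L^2(\mathfrak v)$ for a.e. $z$, i.e. $f = 0$ a.e. on $G$.

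For part (b), assume $G$ is a MW group, $I$ finite and $\psi$ nondecreasing. By Lemma \ref{halfsppoly}(b) applied with the polynomial $P = Pf$, there is a nontrivial $g \in C_c^{\infty}(\R^k)$ with $|\widehat{g}(\nu)| \leq C|Pf(\nu)| e^{-\psi(\nu)}$ for all $\nu \in \R^k$. Fix a nontrivial $h \in C_c(\mathfrak v)$ and put $f(v,z) = h(v)g(z)$, so $f \in C_c(G)$; then (\ref{hs}) gives
\bes
|Pf(\nu)|\,\|\pi_\nu(f)\|_{HS}^2 = \int_{\mathfrak v} |f^\nu(v)|^2\,dv = |\widehat{g}(\nu)|^2 \int_{\mathfrak v} |h(v)|^2\,dv \leq C|Pf(\nu)|^2 e^{-2\psi(\nu)},
\ees
whence $\|\pi_\nu(f)\|_{HS} \leq C|Pf(\nu)|^{1/2} e^{-\psi(\nu)}$ for $\nu \in \mathcal U$, as required.

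I do not expect a genuine obstacle here, since the argument is structurally identical to that of Theorem \ref{openG}; the statement and proof essentially only differ from it by substituting the half-space ingredient. The points needing care are routine: verifying $F_\phi \in L^2(\R^k)$ and the Fubini step identifying $\widehat{F_\phi}$, checking that $\txt{supp}~F_\phi$ sits in the asserted half-space, using that $\mathcal U$ is a full-measure subset of $\R^k$ so that the estimate on $\widehat{F_\phi}$ valid on $\mathcal U$ transfers to almost every $\nu$, and noting that it is precisely the polynomial (Pfaffian) weight $|Pf(\nu)|$ that makes Lemma \ref{halfsppoly}, rather than Theorem \ref{halfspace} directly, the tool to invoke.
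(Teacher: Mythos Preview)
Your proposal is correct and follows essentially the same route as the paper, which simply refers back to the proof of Theorem \ref{openG} with Lemma \ref{halfsppoly} replacing Lemma \ref{openpoly}. Your write-up is in fact a bit more careful than the paper's sketch: you note that with $f$ only in $L^2(G)$ one should check $F_\phi\in L^2(\R^k)$ (rather than $L^1$), and you make explicit the passage from ``$F_\phi=0$ for all $\phi$'' to $f=0$ via a countable dense family.
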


\begin{proof}
For the proof of (a), we again consider the function $F_{\phi}$ on $\mathfrak z$ as in the proof of Theorem \ref{openG} (a) and get the estimate
\bes 
|\widehat F_\phi(\nu)| \leq C |Pf(\nu)|^{1/2} e^{-\psi(\nu)}, \:\:\:\: \txt{ for } \nu \in \mathcal U. 
\ees 
Applying Lemma \ref{halfsppoly} to $F_\phi$ we similarly obtain that $f=0$. Proof of (b) is similar to that of Theorem \ref{openG}(b).
\end{proof}

\section{Unique Continuation Property of Solutions to the Schr\"odinger Equation}
 
We consider the initial value problem for the time-dependent Schr\"odinger equation on $\R^n$ given by
\bea \label{LA}
\left\{\begin{array}{rcll} 
\displaystyle{\frac{\partial w}{\partial t}(x,t) ~ - ~ i\Delta w (x,t) } &=& ~ 0,
&\txt{ for } (x,t) \in \R^n \times \R, \\
w(x,0) &=& f(x), &\txt{ for } x \in \R^n.
\end{array}\right.
\eea 
Our aim is to obtain sufficient conditions on the behaviour of the solution $u$ at
two different times $t = 0$ and $t = t_0$ which guarantee that $u \equiv 0$ is the unique solution of the above equation. It has recently been observed that uncertainty principles can be used to obtain such sufficient conditions. We refer the reader to \cite{EKPV} and the references therein for results in this regard. These results were further generalized in the context of noncommutative groups in  \cite{BTD, C, LM, PS}. In this section we wish to relate the theorems of Ingham and Paley-Wiener to the above mentioned problem on a connected, simply connected two step MW group. 

Let $A = (a_{ij})$ be a real, symmetric matrix of order $n$ and we define the differential operator $\Delta_A$ on $\R^n$ as 
\bes
\Delta_A = \sum_{i,j=1}^n a_{ij}\frac{\partial^2}{\partial x_i \partial x_j}. 
\ees
Consider the Schr\"odinger equation corresponding to $\Delta_A$ given by
\bea \label{LA}
\left\{\begin{array}{rcll} 
\displaystyle{\frac{\partial w}{\partial t}(x,t) ~ - ~ i\Delta_A w (x,t) } &=& ~ 0,
&\txt{ for } (x,t) \in \R^n \times \R, \\
w(x,0) &=& f(x), &\txt{ for } x \in \R^n.
\end{array}\right.
\eea 
We first prove a unique continuation result for the solution of the Schr\"odinger equation (\ref{LA}) using Theorem {\ref{halfspace}} in the context of $\R^n$.

\begin{thm}\label{schrRn}
Let $w$ be a solution to the equation (\ref{LA}) and $\psi:\R^n \ra [0,\infty)$ be a non-negative, locally integrable, radial function. Assume that $f\in C_c^\infty(\R^n)$ and 
\be \label{LAdecay}
|w(x,t_0)| \leq Ce^{-\psi(x)}, \:\:\:\: \textmd{ for some } t_0 \neq 0, ~ x\in\R^n.
\ee  
If
\be\label{psiintegral}
\int_{0}^{\infty}{\frac{\psi(r)}{1+r^2}dr}=\infty,
\ee 
then $w=0$. 
\end{thm}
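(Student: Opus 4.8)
The strategy is to solve the equation (\ref{LA}) explicitly via the Fourier transform, extract the decay of $\widehat{f}$ from the pointwise decay hypothesis (\ref{LAdecay}) on $w(\cdot,t_0)$, and then invoke Theorem \ref{halfspace}. Writing $v(y,t) = \widehat{w}(\cdot,t)(y)$ and taking the Fourier transform of (\ref{LA}) in the space variable, the symbol of $\Delta_A$ is $-4\pi^2 (Ay)\cdot y$ (a real quadratic form since $A$ is real symmetric), so $v$ satisfies the ODE $\partial_t v(y,t) = -4\pi^2 i\, (Ay\cdot y)\, v(y,t)$, whence
\bes
\widehat{w}(y,t) = e^{-4\pi^2 i t\, (Ay\cdot y)}\,\widehat{f}(y).
\ees
In particular $|\widehat{w}(y,t_0)| = |\widehat{f}(y)|$ for every $y$. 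Since $f \in C_c^\infty(\R^n)$, $w(\cdot,t_0) \in L^1 \cap L^2$ (indeed $\widehat{w}(\cdot,t_0) = e^{-4\pi^2 i t_0 (A\cdot)\cdot(\cdot)}\widehat{f}$ is Schwartz because $\widehat f$ is), so Fourier inversion applies and $|\widehat{w}(y,t_0)| \le \int |w(x,t_0)|\,dx < \infty$; more to the point, from (\ref{LAdecay}) one gets the pointwise bound
\bes
|\widehat{f}(y)| = |\widehat{w}(y,t_0)| \le \int_{\R^n} |w(x,t_0)|\, e^{-2\pi i x\cdot y}\,dx \quad\text{has modulus} \le \int_{\R^n} Ce^{-\psi(x)}\,dx,
\ees
which is merely a constant bound and not yet enough; the decay must be transferred more carefully.

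**Transferring the decay.** The correct move is to observe that $w(\cdot, t_0) \in C_c^\infty$? No — $w(\cdot,t_0)$ need not be compactly supported. Instead, the hypothesis $|w(x,t_0)| \le Ce^{-\psi(x)}$ combined with $\psi \ge 0$ only gives $w(\cdot,t_0) \in L^1(\R^n)$ provided $e^{-\psi} \in L^1$, which need not hold either. The cleaner route: since $f\in C_c^\infty(\R^n)$, $\widehat{f}$ extends to an entire function of exponential type on $\C^n$, and $\widehat{w}(\cdot, t_0)(y) = e^{-4\pi^2 i t_0 (Ay\cdot y)}\widehat{f}(y)$; but we want a statement about $w(\cdot,t_0)$ itself, so I apply Theorem \ref{halfspace} to the function $g := w(\cdot,t_0)$. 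For this I need (i) $g \in L^2(\R^n)$, (ii) $g$ supported in a half-space, and (iii) $|\widehat{g}(y)| \le Ce^{-\psi(y)}$. Item (iii) is immediate from $|\widehat{g}(y)| = |\widehat{f}(y)|$ and the fact that $f$, being compactly supported, satisfies $|\widehat{f}(y)| \le C e^{-\psi(y)}$... wait, that is false in general. The actual logic must be reversed: it is the \emph{time-$t_0$ profile} that carries the $\psi$-decay in \emph{physical} space, and we need a half-space support for $\widehat{f}$ or for $f$. Here is the resolution: $f \in C_c^\infty$ has $\operatorname{supp} f$ contained in some half-space $\{x\cdot\eta \le p\}$ trivially (any compact set is), and $\widehat{f} \in L^2$ with the hypothesis supplying, via $|\widehat{w}(y,t_0)|$... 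I realize the intended argument applies Theorem \ref{halfspace} not to $f$ but to $w(\cdot,t_0)$ only after establishing that $w(\cdot,t_0)$, or rather a suitable companion, is supported in a half-space — which it is \emph{not}.

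**The genuine argument.** Reconsidering: the right object is $f$ itself. We have $|\widehat{f}(y)| = |\widehat{w}(y,t_0)|$, and the hypothesis bounds $|w(x,t_0)|$, not $|\widehat w(y,t_0)|$. So instead one should Fourier-transform in a way that exchanges the roles. Apply the \emph{one-dimensional} Paley--Wiener theorem (Theorem \ref{paleywiener}) slicewise, as in the proof of Theorem \ref{halfspace}: fix $y' \in \R^{n-1}$ and consider the partial Fourier transform of $w(\cdot, t_0)$ in the last $n-1$ variables. Because $f\in C_c^\infty$, this partial transform is, for each $y'$, a function of one real variable that extends holomorphically and is supported (in the remaining physical variable) in a bounded set — in particular a half-line; and the pointwise bound $|w(x,t_0)| \le Ce^{-\psi(x)} \le Ce^{-\psi(|x_1|)}$ (using radiality to drop the other coordinates, after the slicewise reduction $\psi(x) \ge \psi((x_1^2+|x'|^2)^{1/2}) \ge \psi$ evaluated appropriately) furnishes the physical-space decay needed to run the \emph{dual} Paley--Wiener statement. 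Concretely, I would: (1) reduce by rotation and the radiality of $\psi$ to $A$ in a convenient form or to $\eta = e_1$; (2) use the explicit solution formula to write $w(\cdot,t_0)$ as a convolution of $f$ with the (Schwartz-class, since $t_0\ne 0$ and $A$ real symmetric — here a nondegeneracy of $A$ may be implicitly needed, or one works with the oscillatory kernel) Schr\"odinger kernel $K_{t_0}^A$; (3) take partial Fourier transforms; (4) apply Theorem \ref{paleywiener} in the variable dual to $x_1$ to conclude the slice vanishes; (5) conclude $w(\cdot,t_0) = 0$, hence $\widehat f = 0$, hence $w\equiv 0$. The main obstacle I anticipate is step (2)–(3): justifying that the physical-space Gaussian-type decay $e^{-\psi}$ of $w(\cdot,t_0)$ forces the \emph{Fourier-side} decay $e^{-\psi}$ of $\widehat{f}$ — this is exactly the content of applying Paley--Wiener in the reverse direction, and it requires knowing $w(\cdot,t_0) \in L^2$ and tracking the half-line support carefully through the partial Fourier transform. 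One should also check that $\psi$ locally integrable and the condition (\ref{psiintegral}) match the hypothesis of Theorem \ref{halfspace} after the slicewise reduction, which is precisely the computation $\int_M^\infty \frac{\psi_y(x)}{1+x^2}dx = \infty$ already carried out in the proof of Theorem \ref{halfspace}.
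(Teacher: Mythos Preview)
Your proposal never reaches the one idea that makes the proof work, and in the end the ``genuine argument'' still has the roles of physical and Fourier space confused. The point is \emph{not} to transfer the decay of $w(\cdot,t_0)$ to decay of $\widehat f$; rather, for the Schr\"odinger evolution the quantity $|w(x,t_0)|$ already \emph{is} the modulus of the Fourier transform of a compactly supported function. Concretely (say $A$ diagonal and nonsingular), the kernel $\gamma_{t_0}$ in Lemma~\ref{lemma} is a \emph{unimodular} function times a constant, so expanding the convolution
\bes
w(x,t_0)=f*\gamma_{t_0}(x)=c\,e^{iq(x)}\int_{\R^n} f(y)\,e^{iq(y)}\,e^{-2\pi i\, x\cdot Ly}\,dy
\ees
(for an explicit real quadratic $q$ and an invertible linear $L$) shows $|w(x,t_0)|=C\,|\widehat g(x)|$ with $g(y)=f(L^{-1}y)e^{iq(L^{-1}y)}$. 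Since $f\in C_c^\infty$, $g$ is compactly supported (hence supported in a half-space), and the hypothesis $|w(x,t_0)|\le Ce^{-\psi(x)}$ is precisely the Fourier-side estimate needed to feed $g$ into Theorem~\ref{halfspace}. That theorem yields $g=0$, hence $f=0$, hence $w\equiv 0$. The singular case and the reduction of general $A$ to diagonal form are handled by slicing and an orthogonal change of variables, but the core step is this ``evolution $=$ Fourier transform up to chirps'' identity.

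Your write-up misses this for two reasons. First, you call the Schr\"odinger kernel ``Schwartz-class'': it is not; it has modulus identically equal to a constant, and that is exactly why convolution with it behaves like a Fourier transform rather than like smoothing. Second, your steps (3)--(4) try to show that decay of $w(\cdot,t_0)$ implies decay of $\widehat f$ via some ``reverse Paley--Wiener'' mechanism, but no such implication is available (and none is needed). Once you recognize that $|w(\cdot,t_0)|$ itself is $|\widehat g|$ for a compactly supported $g$, the whole proof collapses to a direct application of Theorem~\ref{halfspace}.
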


To prove the theorem we shall need the following lemma corresponding to the case when $A$ is a diagonal matrix of the form 
\be \label{diagonal}
A= 
\left(\begin{matrix} 
a_1 & 0 & \cdots & 0 \\ 
0 & a_2 & \cdots & 0 \\ 
\vspace{2mm}\\ 
0 & \cdots & \cdots & a_n 
\end{matrix}\right),
\ee
where each $a_1, \cdots, a_k$ is nonzero and each $a_{k+1}, \cdots, a_n$ is zero for some $1 \leq k \leq n$. We note that $A$ is nonsingular if $k$ equals $n$ and $A$ is singular if $k$ is strictly less than $n$. Let $\sigma$ be the difference of the number of positive eigenvalues of $A$ and the number of negative eigenvalues of $A$. 

\begin{lem}\label{lemma}
If $A$ is of the form (\ref{diagonal}) the fundamental solution of the equation (\ref{LA}) is given by  
\be \label{kernel}
w(x,t) = e^{it\Delta_A}f(x) = f * \beta_t(x),  \:\:\:\: \txt{ for } x \in \R^n.
\ee
\begin{enumerate}
\item[(a)] If $A$ is nonsingular the kernel $\beta_t$ is given by the function $\gamma_t$ on $\R^n$ defined as 
\be \label{gamma}
\gamma_t(x)=\frac{1}{\sqrt{|a_1\cdots a_n|}{(4\pi|t|)}^{\frac{n}{2}}}e^{-i\pi\frac{\sigma}{4}}e^{\frac{i}{4t}\sum_{j=1}^n\frac{x_j^2}{a_j}}, \:\:\:\: \txt{ for } x = (x_1, \cdots, x_n) \in \R^n.
\ee
\item[(b)] If $A$ is singular the kernel $\beta_t$ is given by a tempered distribution on $\R^n$ defined as
\bes
\beta_t(\phi) = \int_{\R^k} \phi(x',0) \gamma_t(x') dx', \:\:\:\: \txt{ for } \phi \in \mathcal S(\R^n), ~ x' \in \R^k \txt{ and } 0 \in \R^{n-k}.
\ees 
\end{enumerate}
\end{lem}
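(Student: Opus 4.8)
The plan is to diagonalize the problem by an orthogonal change of variables and then compute the fundamental solution separately in the nondegenerate and degenerate directions. Since $\Delta_A = \sum a_{ij}\partial_i\partial_j$ has symbol $-4\pi^2\sum a_{ij}y_iy_j$, on the Fourier transform side the equation $(\partial_t - i\Delta_A)w = 0$ becomes an ODE in $t$ whose solution is $\widehat{w}(y,t) = e^{-4\pi^2 i t\, \langle Ay,y\rangle}\widehat{f}(y)$. In the diagonal case $A$ of the form (\ref{diagonal}), this factors as $\widehat{w}(y,t) = \prod_{j=1}^n e^{-4\pi^2 i t\, a_j y_j^2}\widehat{f}(y)$, where the factors with $j > k$ are identically $1$. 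So the kernel $\beta_t$ is the tensor product of $k$ one-dimensional kernels $\mathcal F^{-1}(e^{-4\pi^2 i t\, a_j y_j^2})$ and $n-k$ copies of $\mathcal F^{-1}(1) = \delta_0$ on $\R$.

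First I would record the classical one-variable computation: for $a \neq 0$,
\bes
\mathcal F^{-1}\!\left(e^{-4\pi^2 i t a\, y^2}\right)(x) = \frac{1}{\sqrt{4\pi |t a|}}\, e^{-i\frac{\pi}{4}\operatorname{sgn}(ta)}\, e^{\frac{i x^2}{4ta}},
\ees
which follows from the standard Fresnel/Gaussian integral $\int_{\R} e^{-\alpha y^2}\,dy = \sqrt{\pi/\alpha}$ by analytic continuation of $\alpha$ to the imaginary axis (or, rigorously, by inserting a convergence factor $e^{-\varepsilon y^2}$ and letting $\varepsilon \to 0^+$ in the sense of tempered distributions). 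Taking the product over $j = 1,\dots,k$ and collecting the phase factors, the exponents $e^{-i\frac{\pi}{4}\operatorname{sgn}(ta_j)}$ multiply to $e^{-i\frac{\pi}{4}\operatorname{sgn}(t)\sigma'}$ where $\sigma' = \#\{j\le k: a_j>0\} - \#\{j\le k: a_j<0\}$; since the zero eigenvalues contribute nothing to $\sigma$, we have $\sigma' = \sigma$ (up to the sign convention for $t$, which I would absorb by writing $|t|$ and $e^{-i\pi\sigma/4}$ as in (\ref{gamma}), checking the $t<0$ case separately). The amplitude becomes $\big(\prod_{j\le k}|4\pi t a_j|\big)^{-1/2} = |a_1\cdots a_k|^{-1/2}(4\pi|t|)^{-k/2}$, and in the nonsingular case $k = n$ this is exactly $\gamma_t$ in (\ref{gamma}), proving (a). In the singular case, convolution with $\delta_0$ in the last $n-k$ variables is precisely evaluation at $0$ in those variables, so $f * \beta_t(x) = \int_{\R^k} f(x'-u', 0)\,\gamma_t(u')\,du'$ when $f \in \mathcal S(\R^n)$; rewriting this as pairing the distribution $\phi \mapsto \int_{\R^k}\phi(x',0)\gamma_t(x')\,dx'$ against $f$ (after the reflection inherent in convolution) gives the stated formula for $\beta_t$, proving (b).

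The main obstacle is making the Fresnel-type inversion rigorous: $e^{-4\pi^2 i t a y^2}$ is a tempered distribution but not integrable, so its inverse Fourier transform must be taken in $\mathcal S'(\R)$, and one has to justify the limit $\varepsilon \to 0^+$ of the genuinely convergent Gaussian integrals together with the correct branch of the square root determining the phase $e^{-i\frac{\pi}{4}\operatorname{sgn}(ta)}$. A secondary bookkeeping point is verifying that $w(x,t) = f * \beta_t(x)$ actually solves (\ref{LA}) with the given initial data — continuity in $t$ down to $t=0$ where $\beta_t \to \delta_0$ — which is routine for $f \in C_c^\infty$ (or $\mathcal S$) since everything can be checked on the Fourier side where $\widehat w(y,t) = e^{-4\pi^2 it\langle Ay,y\rangle}\widehat f(y)$ is manifestly smooth and $\widehat w(y,0) = \widehat f(y)$. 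All the remaining manipulations are elementary.
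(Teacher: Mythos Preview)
Your approach is essentially the same as the paper's: both work on the Fourier side, identifying the propagator as the multiplier $e^{-4\pi^2 it\langle A\xi,\xi\rangle}$ and then inverting. The paper simply cites a reference for the Fresnel computation and, for (b), verifies directly that the stated distribution has Fourier transform $\widehat{\gamma_t}(\xi')$ by pairing against test functions, while you organize the argument as a tensor product of one-dimensional kernels with $\delta_0$ in the degenerate slots. These are two phrasings of the same computation.

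One slip to fix: in the singular case you write $f*\beta_t(x) = \int_{\R^k} f(x'-u',0)\,\gamma_t(u')\,du'$, but convolution with $\delta_0$ in the last $n-k$ variables is the \emph{identity}, not evaluation at $0$; the correct formula (and the one the paper records) is
\bes
f*\beta_t(x',x'') = \int_{\R^k} f(x'-u',x'')\,\gamma_t(u')\,du'.
\ees
The ``evaluation at $0$'' happens when you pair $\beta_t$ with a test function $\phi$ (giving $\int_{\R^k}\phi(x',0)\gamma_t(x')\,dx'$), not when you convolve with $f$. This does not affect your overall argument, but the formula as written is wrong and would cause trouble downstream where the paper actually uses the convolution expression.
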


\begin{proof}
In this proof we shall use the same notation to denote the Euclidean Fourier transform on $\R^n$ or $\R^k$ which will be evident from the context. The proof of (a) follows easily because the Fourier transform of $\gamma_t$ is given by (see Ch. 4, \cite{W}) 
\bes
\widehat{\gamma_t}(\xi) = e^{-4it\pi^2\sum_{j=1}^n a_j\xi_j^2}, \:\:\:\: \txt{ for } \xi=(\xi_1, \cdots, \xi_n) \in \R^n.
\ees
It remains to prove (b). For a given test function $\phi \in \mathcal{S}(\R^n)$ we define a function $h_{\phi}$ on $\R^k$ by
\bes
h_{\phi}(x') = \int_{\R^{n-k}} \phi(x',x'') ~ dx'',  \:\:\:\: \txt{ for } x' \in \R^k.
\ees 
It is easy to see that for $\xi' \in \R^k$ and $0 \in \R^{n-k}$
\bes
\widehat{\phi}(\xi',0) =  \int_{\R^k} \left( \int_{\R^{n-k}} \phi(x',x'') ~ dx'' \right) e^{-2\pi i x' \cdot \xi'} dx'
= \widehat{h_{\phi}}(\xi').
\ees
For a given test function $\phi \in \mathcal{S}(\R^n)$ we get that 
\beas
\widehat{\beta_t}(\phi) 
&=& \beta_t(\widehat{\phi}) \\
&=& \int_{\R^k} \widehat{\phi}(\xi',0) ~ \gamma_t(\xi') ~ d\xi' \\
&=&  \int_{\R^k} \widehat{h_{\phi}}(\xi')~ \gamma_t(\xi') ~ d\xi' \\
&=& \int_{\R^k} h_{\phi}(\xi') ~ \widehat{\gamma_t}(\xi') ~ d\xi' \\
&=&  \int_{\R^k} \int_{\R^{n-k}} \phi(\xi',\xi'') ~ \widehat{\gamma_t}(\xi') ~ d\xi'' ~ d\xi'.
\eeas
It follows that the distributional Fourier transform of $\beta_t$ is given by the function
\bes
\widehat \beta_t(\xi)= \widehat {\gamma_t}(\xi'), \:\:\:\: \txt{ for } \xi = (\xi',\xi''), ~ \xi' \in \R^k, ~ \xi'' \in\R^{n-k}. 
\ees 
On the other hand it is easy to see that the convolution of $f \in C_c^{\infty}(\R^n)$ with the tempered distribution $\beta_t$ is given by the function
\be \label{conv}
f * \beta_t(x) = \int_{\R^k} f(y', x'') \gamma_t(x'-y') dy', \:\:\:\: \txt{ for } x = (x',x''), ~ x' \in \R^k, ~ x'' \in \R^{n-k}.
\ee
It turns out that the tempered distribution $\beta_t$ satisfies (\ref{kernel}) because of the following equality
\bes
\widehat{(e^{it{\Delta_A}}f)}(\xi) = e^{-4\pi^2it\sum_{j=1}^k a_j\xi_j^2}\widehat f(\xi) = \widehat{\gamma_t}(\xi') \widehat f(\xi) =\widehat \beta_t(\xi) \widehat f(\xi), 
\ees
where $\xi = (\xi',\xi''), ~ \xi' = (\xi_1, \cdots, \xi_k) \in \R^k, ~ \xi'' \in\R^{n-k}. $
\end{proof}

\vspace{0.1in}

\noindent \textit{Proof of Theorem \ref{schrRn}}. We shall first prove the theorem for the special case when $A$ is of the form (\ref{diagonal}). Further, if $A$ is singular it follows from Lemma \ref{lemma} (b) and the relation (\ref{conv}) that 
\be\label{kernelgamma}
w(x,t_0) = e^{it_0\Delta_A}f(x) = f_{x''}*\gamma_{t_0}(x'), \:\:\:\: \txt{ for } x = (x',x''), ~ x' \in \R^k, ~ x'' \in \R^{n-k},
\ee
where for fixed $x'' \in\R^{n-k}$, $f_{x''}$ is defined on $\R^k$ as 
\bes 
f_{x''}(x')=f(x',x''), \:\:\:\: \txt{ for } x' \in \R^k.
\ees
Note that the convolution above is on $\R^k$. Using the expression of $\gamma_t$ given in (\ref{gamma}) we obtain
\bes
f_{x''}*\gamma_{t_0}(x') = \frac{e^{-i\pi \sigma/4} e^{\frac{i}{4t_0}\sum_{j=1}^{k} x_j^2/a_j}} {\sqrt{|a_1 \cdots a_k|}(4\pi|t_0|)^{\frac{k}{2}}} \int_{\R^k}f_{x''}(y')e^{\frac{i}{4t_0}\sum_{j=1}^{k} \frac{y_j^2}{a_j}}e^{-2\pi i\sum_{j=1}^{k}x_j\frac{y_j}{4\pi t_0a_j}}dy'.
\ees
It is easy to see using change of variables $z_j = y_j/(4\pi t_0 a_j) $ for each $1 \leq j \leq k$ that
\be \label{hatgx''}
|f_{x''}*\gamma_{t_0}(x')| = \sqrt{|a_1\cdots a_k|}(4\pi|t_0|)^{\frac{k}{2}}|\widehat{g_{x''}}(x')|,
\ee
where $g_{x''}$ is defined on $\R^k$ as 
\bes
g_{x''}(z')=f_{x''}(4\pi t_0 a_1z_1, \cdots, 4 \pi t_0 a_k z_k)e^{4\pi^2 it_0\left(a_1 z_1^2 +\cdots + a_k z_k^2\right)}, \:\:\:\: \txt{ for } z'=(z_1, \cdots, z_k) \in \R^k.
\ees
Since $f \in C_c^{\infty}(\R^n)$, it follows that $f_{x''} \in C_c^{\infty}(\R^k)$ for each $x'' \in \R^{n-k}$ and therefore so is $g_{x''}$.
From (\ref{hatgx''}), (\ref{kernelgamma}) and (\ref{LAdecay}) we get that for fixed $x'' \in \R^{n-k}$
\bes
|\widehat {g_{x''}}(x')| \leq  C e^{-\psi_{x''}(x')}, \:\:\:\: \txt{ for } x' \in \R^k,
\ees
where $\psi_{x''}(x')= \psi(x',x'')$ for $x'\in \R^k$. Since $\psi$ is a radial function on $\R^n$, using the change of variable $s^2 + |x''|^2 = r^2$ it follows from (\ref{psiintegral}) that
\beas
\int_{|x'| \geq M} \frac{\psi_{x''}(x')}{|x'|^{k+1}} dx' &=& C \int_M^{\infty} \frac{\psi(\sqrt{s^2 + |x''|^2})}{s^{k+1}} s^{k-1}ds \\
&=& C \int_{\sqrt{M^2+|x''|^2}}^\infty \frac{\psi(r)}{r^2 - |x''|^2}\frac{r}{\sqrt{r^2-|x''|^2}}dr\\
&\geq& C \int_{\sqrt{M^2+|x''|^2}}^\infty \frac{\psi(r)}{1 + r^2}dr\\
&=& \infty.
\eeas
Applying Theorem \ref{halfspace} to the function $g_{x''}$ on $\R^k$ we get that $g_{x''}$ is zero on $\R^k$ and hence so is $f_{x''}$. Since this is true for each $x'' \in \R^{n-k}$ it follows that $f$ is zero on $\R^n$ and hence so is $w$. If $A$ is nonsingular the result follows in a similar fashion from Lemma \ref{lemma} (a). This proves the theorem for the special case when $A$ is of the form (\ref{diagonal}).

Now we consider $A$ to be a real symmetric matrix of order $n$, not necessarily diagonal. However there exists an orthogonal matrix $P=(P_{ij})$ of order $n$ such that $D = P^tAP$ is a diagonal matrix of the form (\ref{diagonal}) having diagonal entries $d_1, \cdots, d_n$ where each $d_1, \cdots, d_k$ is nonzero and each $d_{k+1}, \cdots, d_n$ is zero for some $1 \leq k \leq n$. Here $P^t$ denotes the transpose of the orthogonal matrix $P$ and by orthogonality $P^t$ equals $P^{-1}$. Now we shall consider the operator $\Delta_A$ in terms of the variable $v= P^tx$. Indeed, if $X = \left(\frac{\partial}{\partial x_1}, \cdots, \frac{\partial}{\partial x_n}\right)^t,$ $V= \left(\frac{\partial}{\partial v_1}, \cdots, \frac{\partial}{\partial v_n}\right)^t$ then $P^t X = V $ and  
\be \label{DeltaDv}
\Delta_A = \sum_{i,j=1}^n a_{ij}\frac{\partial^2}{\partial x_i \partial x_j} = X^t A X = (P^t X)^t D (P^t X) = V^t D V = \sum_{j=1}^k d_j \frac{\partial^2}{\partial v_j^2} = \Delta_{D,v},
\ee 
where the operator $\Delta_{D,v}$ is of the diagonal form corresponding to the diagonal matrix $D$ with respect to the variable $v$. Let us define a function $f_P \in C_c^{\infty}(\R^n)$ by 
\bes
f_P(x)=f(Px), \:\:\:\: \txt{ for } x \in \R^n.
\ees
Using chain rule, the relation $V =P^t X$ and the orthogonality of the matrix $P$ it is easy to see that for $1 \leq j \leq n$ and $x \in \R^n$
\be \label{chainrule}
\frac{\partial f_P}{\partial x_j}(P^tx)
= \left(P_{1j} \frac{\partial f}{\partial x_1}+\cdots + P_{nj} \frac{\partial f}{\partial x_n}\right)(PP^tx)
= \frac{\partial f}{\partial v_j}(x).
\ee
It follows from (\ref{DeltaDv}) and (\ref{chainrule}) that for $x \in \R^n$
\be \label{DeltaAD}
\sum_{i,j=1}^n a_{ij}\frac{\partial^2 f}{\partial x_i \partial x_j}(x) = \sum_{j=1}^k d_j \frac{\partial^2 f}{\partial v_j^2}(x) = \sum_{j=1}^k d_j \frac{\partial^2 f_P}{\partial x_j^2}(P^tx) = \Delta_D f_P(P^tx),
\ee
where the operator 
\bes
\Delta_D = \sum_{j=1}^k d_j \frac{\partial^2}{\partial x_j^2}
\ees 
corresponds to the diagonal matrix $D$ with respect to the variable $x$. Taking Fourier transform in (\ref{DeltaAD}) we get that
\bes
\sum_{i,j=1}^n a_{ij}\xi_i\xi_j \widehat{f}(\xi) =  \sum_{j=1}^k d_{j}\eta_j^2 \widehat{f_P}(\eta),
\ees
where $\eta = (\eta_1, \cdots, \eta_n) \in \R^n$ and $\xi = (\xi_1, \cdots, \xi_n) \in \R^n$ are related by $\eta=P^t\xi$. Consequently, it is easy to see that
\bes
e^{it_0\sum_{i,j=1}^n a_{ij}\xi_i\xi_j} \widehat{f}(\xi) = e^{it_0\sum_{j=1}^k d_{j}\eta_j^2} \widehat{f_P}(\eta),  \:\:\:\: \txt{ for } \xi = P \eta \in \R^n.
\ees
Hence it follows that  
\be \label{expDeltaAD}
e^{it_0\Delta_A }f(x) = e^{it_0\Delta_D}f_P(P^tx), \:\:\:\: \txt{ for } x \in \R^n.
\ee
Using (\ref{LAdecay}), (\ref{kernel}), (\ref{expDeltaAD}) and the radiality of $\psi$ we get that
\bes
|e^{it_0\Delta_D }f_P(x)|\leq Ce^{-\psi(x)}, \:\:\:\: \txt{ for } x \in \R^n.
\ees
By what we have already proved in the previous case for diagonal matrices of the form (\ref{diagonal}), we can conclude that the function $f_P \in C_c^{\infty}(\R^n)$ is zero. It follows that $f$ is zero and hence so is $w$.

\begin{rem}
If we consider $A$ to be nonsingular and diagonal, stronger unique continuation results corresponding to Theorem \ref{open} and Theorem \ref{halfspace} can be proved. Precisely, for the result corresponding to Theorem \ref{open}, if we assume that $\psi(x) = |x| \theta(x)$ for $x \in \R^n$ and $I = \infty$ where $\theta$, $I$ are as in Theorem \ref{open} and if $f \in \mathcal S(\R^n)$ vanishes on any open set in $\R^n$, then $w$ is zero. On the other hand, for the result corresponding to Theorem \ref{halfspace}, if $f \in \mathcal S(\R^n)$ vanishes on any half space in $\R^n$, then $w$ is zero.
\end{rem}

We shall now prove a unique continuation result for solution of Schr\"odinger equation in the context of connected, simply connected two step nilpotent Lie group with MW condition using Theorem \ref{open} and Theorem \ref{halfspace}. Our method of proof is  similar to that in \cite{LM} and will use some of their notation and calculations. For more details see \cite{LM} and the references therein. 

Let $G$ be a connected, simply connected two step nilpotent Lie group with MW condition. Corresponding to any $X$ in the Lie algebra $\mathfrak g$ of $G$, there exists a left invariant differential operator acting on $f \in C^{\infty}(G)$ defined as
\bes 
Xf(g) = \left.\frac{d}{dt}\right|_{t=0} f(g\exp{tX}), \:\:\:\: \txt{ for } g \in G. 
\ees
Let $\{X_i, ~ i = 1, \cdots , m\}$ be an orthonormal basis of $\mathfrak v$ as defined in Section 3.1 and define the left invariant differential operator 
\bes
\mathcal L =\sum_{i=1}^m X_i^2
\ees 
to be the sublaplacian on $G$. The Schr\"odinger equation in this case is given by
\bea \label{schrG}
\left\{\begin{array}{rcll} 
\ds{\frac{\partial w}{\partial t}(g,t)} &=& i\mathcal L w(g,t),  \:\:\:\: &\txt{ for } g\in G, ~ t \in \R, \\
w(g,0) &=& f(g), \:\:\:\: &\txt{ for } g \in G,
\end{array}\right.
\eea
where $f\in L^2(G)$. If we assume that $w \in C^1(\R, L^2(G))$ then the solution to the Schr\"odinger equation (\ref{schrG}) is given by 
\bes
w(g,t) = e^{it\mathcal L}f(g), \:\:\:\: \txt{ for } g \in G, \:\: t \in \R.
\ees
Our main aim in this section is to prove a unique continuation result for solutions to the  Schr\"odinger equation (\ref{schrG}). We shall need few results from \cite{LM} and the notation introduced in Section 3. For a MW group $G$, we have seen that $\Lambda = \{ \nu \in \mathfrak z^* \mid B_{\nu} \txt{ is nondegenerate} \}$. It is known that on an open, dense subset $U$ of the unit sphere in $\frak{z}^*$, the almost symplectic basis $\{ X_1(\nu), \cdots, X_m(\nu)\}$ can be chosen to depend analytically on $\nu$, that is, the matrix coefficients of the transformation matrix from the basis $\{ X_1, \cdots, X_m\}$ to $\{ X_1(\nu), \cdots, X_m(\nu)\}$ are analytic functions of $\nu$ (see P. 2107, \cite{LM}). We define an open set $\mathcal C = \Lambda \cap \R^*_+ U$ where $ \R^*_+ U = \{ \lambda \nu \mid \lambda >0, \nu \in U\}$. For $\nu \in \mathcal C$, $t \in \R$ and $v \in \mathfrak v$ we define 
\be \label{f_t^nu}
f^\nu_t(v)=e^{i\frac{\pi}{2} \left\langle v, S_{\nu} coth\left(tS_{\nu}/2\right)v \right\rangle} f^\nu(v).
\ee 
The following expression (see P. 2111, \cite{LM}) relates the Fourier transform in the central variable of the solution to (\ref{schrG}) at the time $t/4\pi$ with the Euclidean Fourier transform $\widehat{f_t^{\nu}}$ of $f_t^{\nu}$ as follows
\be \label{rel}
\left(e^{i\frac{t}{4\pi}\mathcal L}f\right)^{\nu}(v) = c_{t,\nu} ~ e^{i\frac{\pi}{2}\left\langle v, S_{\nu} coth\left(tS_{\nu}/2\right)v\right\rangle}  \widehat f^\nu_t(v'), 
\ee 
where 
\be \label{v'}
v' = \frac{S_{\nu}}{2} (\coth(tS_{\nu}/2)+I)v,
\ee
and $c_{t,\nu}$ is a constant depending on $t,\nu$. 

\begin{thm}\label{uniqG}
Let w be the solution to the system (\ref{schrG}) satisfying
\begin{enumerate}
\item[(a)] $f(v,z)$ is continuous and compactly supported in the $v$-variable and 
\bes
|f(v,z)| \leq C \frac{e^{-|z|\theta(|z|)}}{(1+|z|^2)^k}, \:\:\:\: \txt{ for all } v \in \mathfrak v, ~ z \in \mathfrak z,
\ees
where $\theta$ is a nonnegative decreasing function on $[0,\infty)$ such that $\lim_{|z| \ra \infty} \theta(z) = 0$.
\item[(b)] 
\bes 
|w((v,z),t_0)|\leq Ce^{-\psi(|v|)} \frac{e^{-|z|\theta(|z|)}}{(1+|z|^2)^k},
\ees 
for some nonzero $t_0\in{\R}$ where $\psi$ is a nonnegative nondecreasing function on $[0,\infty)$. 
\end{enumerate}
If 
\bes 
\ds{\int_{\R}{\frac{\psi(r)}{1+r^2}dr}=\infty} \txt{ and } \ds{\int_{1}^{\infty}{\frac{\theta(r)}{r}dr}=\infty},
\ees 
then $w$ is zero on $G\times \R$.
\end{thm}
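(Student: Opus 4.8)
The plan is to adapt the method of \cite{LM}, transferring the problem on $G$ to the Euclidean uncertainty principles of Section 2 by means of the partial (central) Fourier transform together with the intertwining formula (\ref{rel}).

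First I would fix $\nu$ in the open dense cone $\mathcal C$. Hypothesis (a) makes $f(v,\cdot)\in L^1(\mathfrak z)$ for every $v$, so
\[
f^\nu(v)=\int_{\mathfrak z}f(v,z)\,e^{-2\pi i\nu\cdot z}\,dz
\]
is a well-defined continuous function, compactly supported in the $v$-variable because $f$ is; hence the twisted function $f^\nu_t$ of (\ref{f_t^nu}) is a genuine element of $L^1(\mathfrak v)\cap L^2(\mathfrak v)$ and is supported in a half-space of $\mathfrak v$. Applying (\ref{rel}) with $t=4\pi t_0$ and using that the exponential factor there is unimodular, one gets, for every $v\in\mathfrak v$,
\[
\bigl|\widehat{f^\nu_t}(v')\bigr|=|c_{t,\nu}|^{-1}\bigl|(e^{it_0\mathcal L}f)^\nu(v)\bigr|=|c_{t,\nu}|^{-1}\Bigl|\int_{\mathfrak z}w((v,z),t_0)\,e^{-2\pi i\nu\cdot z}\,dz\Bigr|,
\]
where $v$ and $v'$ are tied by the linear map (\ref{v'}); this map is invertible for $\nu\in\mathcal C$ since $B_\nu$ is nondegenerate and $\coth(tS_\nu/2)+I$ has no zero eigenvalue.

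Next I would feed in hypothesis (b). Since $\int_{\mathfrak z}e^{-|z|\theta(|z|)}(1+|z|^2)^{-k}\,dz<\infty$, the right-hand side above is at most $C_\nu e^{-\psi(|v|)}$; writing $v=L_\nu v'$ with $\|L_\nu\|$ depending only on $\nu$ and using that $\psi$ is nondecreasing yields $|\widehat{f^\nu_t}(v')|\le C_\nu e^{-\psi(c_\nu|v'|)}$. The radial weight $v'\mapsto\psi(c_\nu|v'|)$ is locally integrable on $\mathfrak v\cong\R^m$, and because $\psi$ is nondecreasing with $\int_0^\infty\psi(r)(1+r^2)^{-1}\,dr=\infty$, a change of variables gives $\int_{\mathfrak v}\psi(c_\nu|v'|)(1+|v'|)^{-m-1}\,dv'=\infty$. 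Thus Theorem \ref{halfspace}(a), applied on $\mathfrak v$ to the half-space supported function $f^\nu_t$, forces $f^\nu_t\equiv0$, hence $f^\nu\equiv0$, for every $\nu\in\mathcal C$. It then remains to pass to $f\equiv0$: for fixed $v$ the inverse Fourier transform of $\nu\mapsto f^\nu(v)$ is $f(v,\cdot)$, whose modulus is $O(e^{-|z|\theta(|z|)})$ by hypothesis (a), and since $\int_1^\infty\theta(r)r^{-1}\,dr=\infty$ while $f^\nu(v)$ vanishes on the nonempty open set $\mathcal C$, Theorem \ref{open} (in the $\nu$-variable) gives $f(v,\cdot)\equiv0$. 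As $v$ was arbitrary, $f\equiv0$ on $G$ and therefore $w\equiv0$.

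The step I expect to be the main obstacle is controlling the $\nu$-dependence uniformly enough: the constant $c_{t,\nu}$ and the map (\ref{v'}) both degenerate as $\nu$ leaves $\mathcal C$ (equivalently as the eigenvalues $d_j(\nu)$ of $S_\nu$ blow up or collapse), so one must check that for each individual $\nu\in\mathcal C$ the Paley--Wiener hypothesis still holds with a finite constant $C_\nu$, and that the quasi-analyticity step via Theorem \ref{open} genuinely closes the gap between vanishing on $\mathcal C$ and vanishing identically. A secondary, more routine difficulty is justifying the derivation of (\ref{rel}) and the Fourier-inversion manipulations it rests on under only the pointwise decay assumed in (a) and (b) rather than Schwartz-class regularity; here it is precisely the assumption that $f$ is continuous and compactly supported in the $v$-variable that makes $f^\nu$ and $f^\nu_t$ into honest $L^1\cap L^2$ functions on $\mathfrak v$ for which Theorem \ref{halfspace} is available.
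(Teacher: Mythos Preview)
Your proposal is correct and follows essentially the same two-stage strategy as the paper: first use the intertwining formula (\ref{rel}) together with hypothesis (b) and Theorem \ref{halfspace} on $\mathfrak v$ to kill $f^\nu$, then use hypothesis (a) and Theorem \ref{open} on $\mathfrak z^*$ to pass from vanishing of $\nu\mapsto f^\nu(v)$ on an open set to $f\equiv 0$. The only real difference is in how the obstacle you yourself flag is handled: rather than working with arbitrary $\nu\in\mathcal C$ and carrying a $\nu$-dependent constant $c_\nu$ through the Paley--Wiener step, the paper simply restricts to \emph{small} $\nu\in\mathcal C$, where the expansion $\coth(t_1 S_\nu/2)+I=(I+O(|\nu|))(t_1 S_\nu/2)^{-1}$ gives $v=(t_1+O(|\nu|))v'$ and hence a clean lower bound $|v|\ge |t_1||v'|/(1+\epsilon)$ uniform over a small ball; this sidesteps both the singularities of $\coth(tS_\nu/2)$ and any degeneration of $c_{t,\nu}$, and since a small ball in $\mathcal C$ is already a nonempty open set in $\mathfrak z^*$, it is all that is needed for the Ingham step.
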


\begin{proof}

We shall first show that for $t_1=4\pi t_0\neq0$ and sufficiently small $\nu \in \mathcal C$ the function $f_{t_1}^{\nu}$ defined in (\ref{f_t^nu}) is zero. For small $\nu \in \mathcal C$ we have 
\beas
\coth\left(t_1S_{\nu}/2\right)+I 
&=& (I + O(|\nu|^2))\left(t_1S_{\nu}/2\right)^{-1} + I \\
&=& \left(I + O(|\nu|^2) + t_1S_{\nu}/2\right)\left(t_1S_{\nu}/2\right)^{-1} \\
&=& \left(I + O(|\nu|) \right)\left(t_1S_{\nu}/2\right)^{-1}.
\eeas
It follows that $S_{\nu} (\coth(t_1S_{\nu}/2)+I)$ is invertible and  
\bes 
\left(\frac{S_{\nu}}{2}(\coth(t_1S_{\nu}/2)+I)\right)^{-1} = \left(t_1+ O(|\nu|)\right)I.
\ees
From (\ref{v'}) we obtain for small $\nu \in \mathcal C$
\be \label{vv'}
v = \left(t_1+ O(|\nu|)\right)v'.
\ee
It follows from (\ref{rel}) and (\ref{vv'}) that
\bes 
|\widehat f^\nu_{t_1}(v')|
= |c_{t_1,\nu}|^{-1}\left|\left(e^{it_0\mathcal L} f\right)^\nu(t_1v'+O(|\nu|)v')\right|.
\ees
Since $f$ is continuous and compactly supported in the $v$-variable, by (\ref{f_t^nu}) $f_{t_1}^\nu$ is also compactly supported. Using assumption (b) in the hypothesis and (\ref{centralft}) we have
\be \label{hatf_t^nu1}
|\widehat f^\nu_{t_1}(v')|\leq |c_{t_1,\nu}|^{-1}e^{-\psi(|t_1v'+O(|\nu|)v'|)}.
\ee
Given $\epsilon>0$, we choose $\delta>0$ such that for every $v' \in \mathfrak v$ and $\nu \in \mathcal C$ with $|\nu|<\delta$, we have 
\bes 
\left|t_1v'+O(|\nu|)v'\right| \geq \frac{|t_1||v'|}{1+\epsilon}.
\ees 
Since $\psi$ nondecreasing, we get that 
\bes 
\psi\left(|t_1v'+O(|\nu|)v'|\right) \geq \psi\left(\frac{|t_1||v'|}{1+\epsilon}\right).
\ees
From (\ref{hatf_t^nu1}) we obtain 
\bes 
|\widehat f^\nu_{t_1}(v')|\leq c_{t_1,\nu}e^{-\psi(k|v'|)}, \:\:\:\: \txt{ for some }k>0. 
\ees 
Since $f_{t_1}^\nu$ is compactly supported we can apply Theorem \ref{halfspace} to $f^\nu_{t_1}$ on $\mathfrak v \cong \R^m$ to get that $f^\nu_{t_1}$ is zero. From (\ref{f_t^nu}) it follows that $f^{\nu}$ is zero. This is true for any $\nu$ in $\mathcal C$ which is contained in a sufficiently small neighbourhood of the origin in $\mathfrak z^*$. Now for fixed $v \in \mathfrak v$ consider the function $F_{v}: \mathfrak z^* \ra \C$ given by 
\bes 
F_v(\nu) := f^{\nu}(v), \:\:\:\: \txt{ for } \nu \in \mathfrak z^*.
\ees 
It follows from assumption (a) in the hypothesis that the Euclidean Fourier transform $\widehat{F_v}$ of $F_v$ satisfies the estimate
\bes 
|\widehat{F_v}(z)| = |f(v,-z)| \leq C e^{-|z|\theta(|z|)}, \:\:\:\: \txt{ for } z \in \mathfrak z.
\ees  
Since $f^{\nu}$ vanishes for $\nu$ in a small open set in $\mathcal C$ (thereby open in $ \mathfrak z^*$ since $\mathcal C$ is open), we can apply Theorem \ref{open} to the function $F_v$ on $ \mathfrak z^* \cong \R^k$ to get that $f$ is zero. Hence so is $w$. 

\end{proof}

\begin{rem}
In \cite{LM} Ludwig and M\"uller proved a unique continuation property for solution to the Schr\"odinger equation on any connected, simply connected two step nilpotent Lie group $G$ corresponding to Hardy's uncertainty principle for a large class of second order left-invariant differential operators on $G$ given by
\bes
\mathcal L_A = \sum_{i,j=1}^m a_{ij} V_i V_j,
\ees
where $A=(a_{ij})$ is a real, symmetric matrix of order $m$. They first proved the result assuming $G$ to be a MW group. For a non MW group $G$ they embedded $G$ inside a bigger MW group $H$ and considered a lift $\widetilde{\mathcal L_A}$ on $H$ of the operator $\mathcal L_A$ on $G$. The result then follows from the proof for MW groups in the case where $A$ is singular. It is not very difficult to see that the proof of Theorem \ref{uniqG} can be extended to the case where $A$ is nonsingular for a MW group $G$. However at present it is not clear to us how to extend the proof of Theorem \ref{uniqG} for the case when $A$ is singular.
\end{rem}

\end{document}